\documentclass[12pt]{article}
\usepackage{graphicx} % Required for inserting images
\usepackage[top=.9in, bottom=.9in, left=.5in , right=.5in]{geometry}
\usepackage{amsmath,amsthm,amssymb, mathrsfs}
\usepackage[mathlines]{lineno}
\usepackage{comment}
\newtheorem{thm}{Theorem}[section]
\newtheorem*{thm*}{Theorem}
\newtheorem{lem}[thm]{Lemma}
\newtheorem{clm}{Claim}[thm]
\newtheorem{prop}[thm]{Proposition}
\newtheorem{cor}[thm]{Corollary}

\usepackage{xcolor}

\theoremstyle{definition}
\newtheorem{rmq}[thm]{Remark}

\newcommand{\E}[2][]{\ensuremath{\mathbb{E}_{#1}\left[#2 \right]}}
\newcommand{\Prob}[2][]{\ensuremath{\mathbb{P}_{#1} \left(#2 \right)}}
\newcommand{\Var}[2][]{\operatorname{Var}_{#1}\left(#2\right)}
\newcommand{\eps}{\varepsilon}
\newcommand{\dd}{\mathrm{d}}
\newcommand{\Probtwo}[2][]{\ensuremath{\mathbb{Q}_{#1} \left(#2 \right)}}
\DeclareMathOperator{\spine}{spine}

\newcommand{\ensymboldremark }{\hfill$\blacktriangleleft$}
\title{Growth scales and uniform integrability of branching processes in varying environments}
\author{Tejas Iyer\footnote{Weierstrass Institute for Applied Analysis and Stochastics, Anton-Willem-Amo Str. 39, 10117 Berlin, Germany.}}
\date{\today}

\begin{document}
\maketitle
\abstract{We derive necessary and sufficient conditions for a sequence of constants $(C_{n})_{n \in \mathbb{N}_0}$ to be a growth scale of a branching process in (possibly defective) varying environments $(Z_{n})_{n \in \mathbb{N}_0}$ in the sense that $(Z_{n}/C_{n})$ is bounded from above and below with positive probability. Along the way, we derive a Kesten-Stigum type result - necessary and sufficient conditions for branching processes with varying environments to converge in $L^1$ when normalised by their mean. The proofs exploit truncation and change of measure arguments, convergence of random series and martingale techniques.}

\noindent  \bigskip
\\
{\bf Keywords}: branching processes in varying environments, growth scales, rates of growth, Kesten-Stigum theorem, spine, fragmentation processes, change of measure. 
\\\\
{\bf AMS Subject Classification 2020: 60J80}

\section{Introduction}

Branching processes in varying environments (BPVEs) are among the simplest extensions of the classical Bienaymé–Galton–Watson (BGW) process. As in the classical setting, offspring numbers are independent across individuals, but the reproduction law is allowed to vary from one generation to the next. Unlike finite-mean BGW processes, where the population grows on an essentially exponential scale on survival, BPVEs can exhibit a wide spectrum of growth regimes. This makes BPVEs a more realistic toy model for populations evolving under changing conditions, where exponential growth is typically only a transient phenomenon before external constraints become significant. A natural question is therefore to identify deterministic normalisations that capture the eventual size of the population of a BPVE.

Much of the existing theory (e.g.~\cite{schuh-barbour-infinite-mean, dsouza-rates-of-growth}) addresses this problem through \emph{rates of growth}: deterministic sequences $(C_n)_{n\in\mathbb{N}_0}$ for which $Z_n/C_n$ converges to a non-zero finite limit with positive probability. In a varying environment, however, it is not clear that the correct order of magnitude should be accompanied by convergence of the normalised process, since the reproduction law itself varies with $n$. The environment may itself introduce deterministic oscillations or changes of scale, while rare exceptionally large offspring values may disrupt a proposed normalisation altogether. This motivates the weaker, but more robust, notion of a \emph{growth scale}. Instead of requiring convergence of $Z_n/C_n$, we ask only that the normalised process remain bounded away from both $0$ and $\infty$ along the full sequence, with positive probability. 

In this article we provide necessary and sufficient conditions for a deterministic sequence $(C_n)_{n\in\mathbb{N}_0}$ to be a growth scale of a (possibly defective) branching process in varying environment. The criterion separates the two issues suggested above. Offspring values much larger than the proposed scale must occur only finitely often when the population is of order $C_n$; after truncating these large reproduction events, the mean of the resulting finite-mean process must be comparable to $C_n$; and the associated martingale must converge in $L^1$. We characterise this final condition by proving a Kesten--Stigum type theorem for BPVEs. In particular, the result shows that $(C_n)_{n\in\mathbb{N}_0}$ is a growth scale for the original process if and only if the corresponding sequence of truncated means is a rate of growth for the truncated finite-mean process. 

The proofs are based in part, on a spine argument for the Kesten--Stigum type theorem, and a diagonalisation approach to definng a Heyde martingale. This gives a more transparent route to the criterion and avoids much of the inverse-generating-functions and analytical machinery used in earlier treatments~\cite{schuh-barbour-infinite-mean, dsouza-rates-of-growth} of general growth rates. In this sense, the paper not only extends the classical rate-of-growth viewpoint to the more flexible notion of growth scales, but also provides an insight into probabilistic mechanism underlying the corresponding conditions.

\subsection{Model and background} 
Formally, suppose we are given a collection of laws $\boldsymbol{\mu} = (\mu_{n})_{n \in \mathbb{N}_0}$ on the space $\overline{\mathbb{N}}_{0} : = \mathbb{N} \cup \{0\} \cup \{\infty\}$, and a collection of random variables $(X_{n, j})_{n \in \mathbb{N}_0, j \in \mathbb{N}}$ that are independent, such that each $X_{n, j}$ has law $\mu_{n}$. A branching process in varying environments (BPVE) $(Z_{n})_{n \in \mathbb{N}_0}$, with offspring laws $(\mu_{n})_{n \in \mathbb{N}_0}$ is the stochastic process such that $Z_{0} = 1$ and, 
\begin{equation} \label{eq:definition-of-process}
    Z_{n+1} = \sum_{j=1}^{Z_{n}} X_{n, j}. 
\end{equation}    
We use $X_n$ to denote a generic random variable with law $\mu_n$. The value $\infty$ is absorbing, i.e., once $Z_n=\infty$, we set $Z_m=\infty$ for every $m\geq n$. In the terminology of the branching-process literature, such processes are often called \emph{defective}; see, for example,~\cite{defective-bgw, defective-bpves}.

\begin{rmq}
    The classical Bienaym\'e-Galton-Watson (BGW) is the special case in which each $X_n$ takes values in $\mathbb{N}_0$ and all offspring laws are equal. 
\end{rmq}

A sequence $(C_{n})_{n \in \mathbb{N}_0} \in [1, \infty)^{\mathbb{N}_0}$ is said to be a \emph{rate of growth} of a BPVE $(Z_{n})_{n \in \mathbb{N}_0}$ if, assuming the limit exists with positive probability, 
\[
\Prob{0 < \lim_{n \to \infty} \frac{Z_{n}}{C_{n}} < \infty} > 0. 
\]
For the constant sequence $C_{n} \equiv 1$, Church~\cite{church} provided necessary and sufficient conditions for a non-defective BPVE to converge in distribution to a non-zero, finite random variable. Lindvall~\cite{lindval} showed that, in the non-defective case, every BPVE converges almost surely. 

Another natural candidate rate of growth is the mean. Suppose that 
\begin{equation} \label{eq:possibly-new-equation}
0 < m_{n} := \E{X_{n}} < \infty 
\end{equation}
for every $n \in \mathbb{N}_0$, and define 
\begin{equation} \label{eq:mean-bpve}
M_{n} := \begin{cases}
    \prod_{j=0}^{n-1} \E{X_{j}} & \text{if $n \geq 1$} \\
    1 & \text{otherwise}. 
\end{cases}
\end{equation}
Then, $(Z_{n}/M_{n})$ is a non-negative martingale, hence converges almost surely to a finite limit, and the question of whether $(M_{n})_{n \in \mathbb{N}_0}$ is a rate of growth is a question about the non-degeneracy of this limit. In the case of BGW processes, this is answered by the Kesten-Stigum theorem~\cite{kesten-stigum, conceptual-xlogx}: the mean is a rate of growth if and only if $\E{X \log{X}} < \infty$. For BPVEs sufficient criteria for uniform integrability of the martingale were proved by Fearn~\cite{fearn} and Jagers~\cite{Jagers} using $L^2$ boundedness. These were improved by~\cite{goettge-suff}, and later obtained as a special case of the change-of-measure framework of Biggins and Kyprianou~\cite{measure-change}. 

Seneta-Heyde norming~\cite{seneta, Heyde}, and analysis of Heyde's martingale in the infinite mean case~\cite{schuh-barbour-infinite-mean} provide necessary and sufficient conditions for a general sequence $(C_{n})_{n \in \mathbb{N}_0}$ to be a rate of growth of a classical BGW process. In the varying environment setting, MacPhee--Schuh~\cite{macphee-schuh} and D'Souza~\cite{dsouza-rates-of-growth}, gave criteria for a general BPVE to have a rate of growth $(C_{n})_{n \in \mathbb{N}_0}$. These works also show that BPVEs can have several, even infinitely many, non-trivial (i.e. non-asymptotically equivalent) rates of growth. 

The approach in~\cite{dsouza-rates-of-growth, macphee-schuh} is based on Heyde-type martingales. While this is natural in the homogeneous BGW setting where one can iterate inverse generating functions~\cite{schuh-barbour-infinite-mean}, the corresponding machinery is technically delicate in varying environments: the relevant generating functions, their ranges, and the domains on which inverses are defined all vary with the generation. In~\cite{macphee-schuh, dsouza-rates-of-growth}, these varying environment adaptations are invoked rather than proved in detail. We therefore provide a self-contained proof. Our proof reduces the growth-scale problem to a finite-mean truncated process and then applies a spine criterion for $L^1$ convergence. Generating functions still enter in one step, but the argument avoids inverses by constructing a Heyde martingale via a diagonalisation argument - a construction which, to our knowledge, is novel. 

Other works on BPVEs include criteria for the existence of a single rate of growth~\cite{supercritical-xlogx-biggins, varying-environments-seneta-heyde, kirpicheva2026branchingprocessvaryingenvironment}, critical behaviour~\cite{yaglom-limit-law}, nearly degenerate behaviour~\cite{nearly-degenerate}, genealogical and coalescent structure~\cite{geneological-structure, coalescent-structure}, and a general setting where classifications into subcritical, critical and supercritical regimes are possible~\cite{classification}.

\subsection{Main results}
Our first main result is a Kesten-Stigum type theorem for finite-mean BPVEs. For a non-negative, integer valued random variable $X$ with $0 < \E{X} < \infty$, let $\widehat{X}$ denote the associated size-biased version of $X$, defined by
\begin{equation}
\Prob{\widehat{X} = k} = \frac{k \Prob{X = k}}{\E{X}}. 
\end{equation} 
\begin{thm} \label{thm:unif-integrable}
    Let $(Z_{n})_{n\in \mathbb{N}_0}$ be a BPVE with offspring sequence $\boldsymbol{\mu}$ satisfying $\E{X_{n}} < \infty$ for all $n \in \mathbb{N}$. Suppose $(M_{n})_{n \in \mathbb{N}_0}$ is as defined in~\eqref{eq:mean-bpve} and let $K \in (0,\infty)$. Then,  
     \begin{enumerate}
            \item \label{item:non-deg-limit} The martingale $(Z_{n}/M_{n})_{n \in \mathbb{N}_{0}}$ converges in $L^1$ if and only if 
            \begin{equation} \label{eq:limsup-mean}
            \limsup_{n \to \infty} 1/M_{n} := M^{*} < \infty,
            \end{equation}
            and
            \begin{equation} \label{eq:l1-requirement}
                \sum_{n=0}^{\infty} \Prob{\widehat{X}_{n} - 1\geq K M_{n+1}} + \frac{\E{\left(\widehat{X}_{n} - 1 \right)\mathbf{1}_{\widehat{X}_n - 1 \leq K M_{n+1}}}}{M_{n+1}} < \infty.  
            \end{equation}
            Moreover,  
            \begin{equation}
            \label{eq:prob-pos}
                \Prob{\lim_{n \to \infty} \frac{Z_{n}}{M_{n}} > 0} \geq \E{\frac{1}{M^{*} + \sum_{n=0}^{\infty} \frac{\widehat{X}_{n} - 1}{M_{n+1}}}} > 0.
            \end{equation}
            \item \label{item:deg-limit} If~\eqref{eq:l1-requirement} fails then
             \[\lim_{n \to \infty} \frac{Z_{n}}{M_n} = 0 \quad \text{almost surely,}\] whilst if~\eqref{eq:limsup-mean} fails we have $\lim_{n \to \infty} Z_n =0$ almost surely. 
        \end{enumerate}
\end{thm}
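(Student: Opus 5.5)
We follow the spine (size-biased tree) approach of Lyons--Pemantle--Peres, adapted to varying environments. Let $W_n := Z_n/M_n$. On the space of marked trees we define the size-biased measure $\mathbb{Q}$ by $\dd\mathbb{Q}|_{\mathcal{F}_n} = W_n \,\dd\mathbb{P}|_{\mathcal{F}_n}$. Under $\mathbb{Q}$ the process has the following description. There is a distinguished spine individual in each generation $n$, and it has $\widehat{X}_n$ children. One of these children is chosen uniformly to continue the spine. The remaining $\widehat{X}_n - 1$ children start independent $\mathbb{P}$-BPVEs in the shifted environment $(\mu_{n+1}, \mu_{n+2}, \dots)$. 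The standard dichotomy then applies: $W_n \to W$ in $L^1(\mathbb{P})$ iff $\mathbb{Q}(\limsup W_n < \infty) = 1$, and $\mathbb{E}[W]=0$ iff $\limsup W_n = \infty$ $\mathbb{Q}$-a.s. We will use this in the form $\mathbb{E}[W] = \mathbb{Q}(\limsup W_n<\infty)$.

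\emph{Sufficiency.} Let $\mathcal{G}$ be the $\sigma$-algebra generated by the spine offspring numbers $(\widehat{X}_n)$. Conditioning on $\mathcal{G}$ and using the martingale property of the subtrees gives
\[
\mathbb{E}_{\mathbb{Q}}[W_n \mid \mathcal{G}] = \frac{1}{M_n} + \sum_{k=0}^{n-1} \frac{\widehat{X}_k - 1}{M_{k+1}}.
\]
The $\widehat{X}_k$ are independent, so under~\eqref{eq:l1-requirement} Kolmogorov's three-series theorem applies to the nonnegative summands $(\widehat{X}_k-1)/M_{k+1}$. The first series in~\eqref{eq:l1-requirement} controls how often the truncation level $KM_{k+1}$ is exceeded. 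The second controls the means of the truncated terms, and since the terms are nonnegative and bounded by $K$, the variance series is dominated by $K$ times the mean series. Hence $S := \sum_k (\widehat{X}_k-1)/M_{k+1} < \infty$ a.s. Together with~\eqref{eq:limsup-mean}, $\sup_n \mathbb{E}_{\mathbb{Q}}[W_n\mid\mathcal{G}] \le \sup_n M_n^{-1} + S < \infty$. By conditional Fatou, $\liminf W_n<\infty$ $\mathbb{Q}$-a.s. Since $1/W_n$ is a positive $\mathbb{Q}$-supermartingale, $W_n$ converges in $[0,\infty]$ $\mathbb{Q}$-a.s., so $\limsup W_n < \infty$ $\mathbb{Q}$-a.s., which gives $L^1$ convergence.

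For~\eqref{eq:prob-pos}, write $\mathbb{P}(W>0) = \mathbb{E}_{\mathbb{Q}}[W^{-1}\mathbf{1}_{W>0}]\mathbb{E}[W] \ge \mathbb{E}_{\mathbb{Q}}[1/W]$. Apply conditional Jensen with the convex map $x\mapsto 1/x$, using $\mathbb{E}_{\mathbb{Q}}[W\mid\mathcal{G}] \le \liminf_n \mathbb{E}_{\mathbb{Q}}[W_n\mid\mathcal{G}] \le M^* + S$. This gives $\mathbb{E}_{\mathbb{Q}}[1/W] \ge \mathbb{E}[1/(M^*+S)]$. Since $\widehat{X}_n$ has the same law under $\mathbb{Q}$ and in the statement, this is the stated bound. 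We need some care that $\liminf 1/M_n$ versus $\limsup$ gives $M^*$; an upper bound by $M^*$ plus a vanishing error suffices along a subsequence.

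\emph{Necessity and part~\ref{item:deg-limit}.} Suppose first that~\eqref{eq:limsup-mean} fails. Then $\mathbb{P}(Z_n\ge 1) \le \mathbb{E}[Z_n] = M_n$, and since $\limsup 1/M_n = \infty$ we have $M_n \to 0$ along a subsequence. Because $Z$ is integer valued and $0$ is absorbing, $\mathbb{P}(\text{extinction}) = \lim_n \mathbb{P}(Z_n = 0) = 1$, so $Z_n \to 0$ a.s.

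Now suppose~\eqref{eq:l1-requirement} fails. The $\widehat{X}_n$ are independent, so by the converse of the three-series theorem (or Borel--Cantelli, depending on which series diverges), $S=\infty$ a.s. We then need $\limsup W_n = \infty$ $\mathbb{Q}$-a.s.
\begin{itemize}
\item If infinitely many $\widehat{X}_n - 1 \ge K M_{n+1}$, this alone is not enough, since it only gives $W_{n+1}\ge K$ from the spine's siblings; we need unboundedness.
\item Instead we use $W_{n+1} \ge (\widehat{X}_n-1)/M_{n+1}$ together with a lower-bound argument: each sibling subtree at generation $k$ contributes a rescaled martingale $W^{(k,i)}_\infty$, and $W_n \ge \sum_{k<n}M_{k+1}^{-1}\sum_{i} W^{(k,i)}_{n}$. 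A conditional second Borel--Cantelli/law of large numbers argument along the independent spine contributions then shows this diverges.
\end{itemize}
This is the main obstacle. The subtree martingales may themselves have zero limits, which is circular. So I would first try a truncated comparison: either it is already known that $W=0$, or each subtree limit has mean bounded below, and we argue by contradiction assuming $\mathbb{E}[W]>0$. Under this assumption, uniform integrability for the shifted environments gives $\mathbb{P}(W^{(k)}>c)\ge c$ uniformly in $k$ for large $k$ (using $M_n$ bounded below along the tail). Divergence of $S$ then forces infinitely many sibling contributions of size $\gtrsim$ a multiple of the divergent increments, so $\limsup W_n=\infty$ $\mathbb{Q}$-a.s. and hence $\mathbb{E}[W]=0$, a contradiction. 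Since $\{W>0\}$ has probability zero once $\mathbb{E}[W]=0$, this gives $W=0$ a.s. and completes part~\ref{item:deg-limit}.
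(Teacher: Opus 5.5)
\paragraph{Verdict.} Your sufficiency argument, the bound \eqref{eq:prob-pos}, and the case where \eqref{eq:limsup-mean} fails are fine and essentially match the paper. The only difference there is that you use the Lyons-style $\mathbb{Q}$-supermartingale $1/W_n$ where the paper uses a conditional submartingale. There is, however, a genuine gap in Item~\ref{item:deg-limit} when \eqref{eq:l1-requirement} fails, which is the heart of the theorem.

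\paragraph{The key claim is false.} You assert that $\mathbb{E}[W]>0$ gives $\mathbb{P}(W^{(k)}>c)\ge c$ uniformly in large $k$. This does not follow. What is true is only that $\mathbb{E}[W^{(k)}]=\mathbb{E}[W]$ for every $k$, since $W=M_k^{-1}\sum_{i\le Z_k}W^{(k,i)}$. There is no uniform integrability across shifted environments. For example:
\begin{itemize}
\item take $X_{2j}\in\{0,2^j\}$ with $\mathbb{P}(X_{2j}=2^j)=2^{-j}$, and $X_{2j+1}\equiv 4^{j+1}$;
\item then $M_{2j}=M_{2j+1}=4^{j(j+1)/2}\to\infty$;
\item the spine sum $\sum_n(\widehat X_n-1)/M_{n+1}$ is deterministic and finite, so $\mathbb{E}[W]=1$;
\item yet $\mathbb{P}(W^{(2j)}>0)\le 2^{-j}\to 0$.
\end{itemize}
Without that bound, all you know is this. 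Conditionally on the spine, the sibling contributions $T_k:=M_{k+1}^{-1}\sum_{i\le N_k}W^{(k+1,i)}$ are independent, with conditional means summing to $\mathbb{E}[W]\,S=\infty$. A divergent mean sum of independent, unbounded, nonnegative variables does not force $\sum_k T_k=\infty$ almost surely. Some concentration is needed, and your sketch does not supply it.

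\paragraph{The exceedance case is easier than you think.} Your worry in the first bullet goes away once $K$ is allowed to vary. By the three-series theorem, failure of \eqref{eq:l1-requirement} for one $K$ is equivalent to $S=\infty$ a.s., hence to failure for every $K$. So one of two cases holds.
\begin{itemize}
\item For every $K'$, $\sum_n\mathbb{P}(\widehat X_n-1\ge K'M_{n+1})=\infty$. Then the second Borel--Cantelli lemma, together with $W_{n+1}\ge N_n/M_{n+1}$ under $\mathbb{Q}$, gives $\limsup_n W_n\ge K'$ $\mathbb{Q}$-a.s.\ for all $K'$, hence $\limsup_n W_n=\infty$.
\item For some $K'$ this series converges while the truncated-mean series diverges. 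This is the genuinely new regime.
\end{itemize}

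\paragraph{How the paper handles the new regime.} It uses no contradiction hypothesis.
\begin{itemize}
\item It truncates offspring at $K'M_{n+1}+1$. The truncated means stay comparable to $M_n$ because $\prod_j\bigl(1-\mathbb{P}(\widehat X_j>K'M_{j+1}+1)\bigr)>0$.
\item It couples the truncated and original processes so that they agree with probability at least $1-1/K'$, using Doob's maximal inequality.
\item For the truncated process, spine offspring are at most $CM_{n+1}$. The paper computes the variance of each sibling contribution exactly (Lemma~\ref{lem:variance}), in terms of the same quantities $u_i=\mathbb{E}[\widehat X_i-1]/M_{i+1}$ whose sum diverges.
\item A block decomposition in which each block's mean dominates $C$ plus the means of all later blocks (Claim~\ref{clm:div-sum-ineq}) lets Cantelli's inequality give each block a fixed positive chance of exceeding $M/2$. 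These events are independent across blocks (Lemma~\ref{lem:summands-diverge}), which yields $\limsup_n\widehat Z_n/M_n=\infty$.
\end{itemize}
This truncation-plus-second-moment concentration step is the idea missing from your proposal.
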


\begin{rmq}
    By the Kolmogorov three-series theorem, condition~\eqref{eq:l1-requirement} is equivalent to  
    \begin{equation} \label{eq:expl-1} \sum_{m=0}^{\infty} \frac{\widehat{X}_{m} - 1}{M_{m+1}} < \infty \quad \text{almost surely.}
    \end{equation}
    For the classical BGW process, Equation~\eqref{eq:expl-1} is equivalent to $\E{X \log{X}} < \infty$; see also~\cite{lyons-pemantle-peres}.   {\small\ensymboldremark }
\end{rmq}

\begin{rmq}
    The sufficient direction of Theorem~\ref{thm:unif-integrable} first appeared in~\cite[Theorem 5]{goettge-suff}, and, via a spine argument in~\cite[Corollary~3.1]{measure-change}. {\small\ensymboldremark }
\end{rmq}

\begin{rmq} \label{rmq:novelty-unif}
The main point in the proof of Theorem~\ref{thm:unif-integrable} is the treatment of the degenerate regime in Item~\ref{item:deg-limit}. The classical spine argument identifies the summability of
\[
\sum_{n=0}^{\infty}\frac{\widehat X_n-1}{M_{n+1}}
\]
as the condition for a non-degenerate martingale limit. The additional difficulty is to prove the converse in the regime where, for some $K>0$,
\[
\sum_{n=0}^{\infty}
\Prob{\widehat X_n-1\geq K M_{n+1}}<\infty
\]
but the corresponding truncated expectation sum diverges. {\small\ensymboldremark }
\end{rmq}

For our next result, note that a deterministic sequence may describe the correct order of magnitude of a BPVE even when the normalised process does not converge. This motivates the following definition. We say that $(C_n)_{n\in\mathbb{N}_0}\in [1,\infty)^{\mathbb{N}_0}$ is a \emph{growth scale} of $(Z_n)_{n\in\mathbb{N}_0}$ if
\begin{equation}\label{eq:growth-scale-defn}
\Prob{
0<\liminf_{n\to\infty}\frac{Z_n}{C_n}
\leq
\limsup_{n\to\infty}\frac{Z_n}{C_n}
<\infty}>0.
\end{equation}
Thus every rate of growth is a growth scale, but a growth scale allows persistent oscillations in the normalised population size. 

\begin{thm} \label{thm:rate-of-growth}
    Suppose $(Z_{n})_{n\in \mathbb{N}_0}$ is a BPVE with offspring sequence $\boldsymbol{\mu}$ and $(C_{n})_{n \in \mathbb{N}_0} \in \mathbb{N}^{\mathbb{N}_0}$. Then, the sequence $(C_{n})_{n \in \mathbb{N}_0}$ is a growth scale if and only if there exists $\gamma \in (0, \infty)$ such that
        \begin{equation} \label{eq:truncation-determines-original}
            \sum_{n=0}^{\infty} C_{n} \Prob{X_{n} > \gamma C_{n+1}} < \infty,
        \end{equation}
        and, with $Y^{(\gamma)}_{n}$ distributed like $X_{n} \mathbf{1}_{X_{n} \leq \gamma C_{n+1}}$ and
        $\tilde{M}^{(\gamma)}_{n} := \prod_{j=0}^{n-1} \E{Y_{j}}$, 
        \begin{equation} \label{eq:rate-of-growth-mean-comparison}
            0 < \liminf_{N \to \infty} \frac{C_{N}}{\tilde{M}^{(\gamma)}_{N}} \leq \limsup_{N \to \infty} \frac{C_{N}}{\tilde{M}^{(\gamma)}_{N}} < \infty. 
        \end{equation}
        and
        \begin{equation} \label{eq:truncation-converges}
        \sum_{n=0}^{\infty} \frac{\E{\widehat{Y}^{(\gamma)}_{n} - 1}}{\tilde{M}^{(\gamma)}_{n+1}} < \infty,
        \end{equation}
    Moreover, the sequence $(C_{n})_{n \in \mathbb{N}_0}$ is a rate of growth if and only if, for some $\gamma > 0$, $\lim_{N \to \infty} \frac{C_{N}}{\tilde{M}^{(\gamma)}_{N}}$ exists in $(0,\infty)$, and Equations~\eqref{eq:truncation-determines-original} and~\eqref{eq:truncation-converges} are satisfied.   
\end{thm}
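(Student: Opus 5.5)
We plan to reduce everything to Theorem~\ref{thm:unif-integrable} by coupling $(Z_n)$ with a truncated process. Fix $\gamma>0$ and define $(\tilde Z_n)$ by the same recursion~\eqref{eq:definition-of-process}, but with $X_{n,j}$ replaced by $Y_{n,j} := X_{n,j}\mathbf{1}_{X_{n,j}\le \gamma C_{n+1}}$. Then $\tilde Z_n \le Z_n$, and the two processes agree up to generation $n$ unless some individual in an earlier generation had more than $\gamma C_{k+1}$ offspring. Moreover $(\tilde Z_n)$ is a finite-mean BPVE with mean sequence $\tilde M^{(\gamma)}_n$, and condition~\eqref{eq:truncation-converges} is exactly the series appearing in~\eqref{eq:expl-1}, taken in expectation, for this truncated process.

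\textbf{Sufficiency.} Assume~\eqref{eq:truncation-determines-original}--\eqref{eq:truncation-converges}. Since $\widehat Y_n - 1\ge 0$, finiteness of the expected sum gives almost sure finiteness. Condition~\eqref{eq:rate-of-growth-mean-comparison} together with $C_n\ge 1$ gives $\limsup 1/\tilde M_n<\infty$. Theorem~\ref{thm:unif-integrable} then yields $\tilde Z_n/\tilde M_n\to W$ with $\Prob{W>0}>0$.

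We next control the exceptional truncations. On the event $\{\sup_n Z_n/C_n\le A\}$, the expected number of pairs $(n,j)$ with $j\le Z_n$ and $X_{n,j}>\gamma C_{n+1}$ is bounded by $A\sum_n C_n\Prob{X_n>\gamma C_{n+1}}<\infty$. This statement is circular, so the bound has to be made rigorous differently. We define the stopping time $\tau_A=\inf\{n: Z_n>AC_n\}$ and bound $\mathbb{E}\big[\sum_{n<\tau_A}\sum_{j\le Z_n}\mathbf{1}_{X_{n,j}>\gamma C_{n+1}}\big]\le A\sum_n C_n \Prob{X_n>\gamma C_{n+1}}$. Before $\tau_A$, the $Z$ and $\tilde Z$ processes differ only through finitely many such events.

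The point is that $\tilde Z$ itself satisfies $\sup_n \tilde Z_n/C_n<\infty$, because of~\eqref{eq:rate-of-growth-mean-comparison} and the convergence of $\tilde Z_n/\tilde M_n$. One therefore runs the argument on $\tilde Z$: with $\tilde\tau_A$ the analogous stopping time for $\tilde Z$, the expected number of truncations by individuals of $\tilde Z$ before $\tilde\tau_A$ is finite. By Borel--Cantelli/Markov, one can then choose a generation $N$ and a state such that, with positive probability, no truncation happens after $N$ in the lineage of a surviving subpopulation. This is done by restarting at generation $N$ from $\tilde Z_N$ individuals and using the tail sum $\sum_{n\ge N}C_n\Prob{X_n>\gamma C_{n+1}}\to 0$. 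On the resulting event $Z_n=\tilde Z_n$ eventually, so $Z_n/C_n$ is bounded away from $0$ and $\infty$. If instead $\lim C_n/\tilde M_n$ exists, we get a genuine limit, which proves the rate-of-growth claim in that direction.

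\textbf{Necessity.} Suppose $C$ is a growth scale, and take $A$ with $\Prob{E_A}>0$, where $E_A=\{a\le Z_n/C_n\le A \ \forall n\ge n_0\}$. Conditionally on $\mathcal F_n$, the number of children exceeding $\gamma C_{n+1}$ is a sum of $Z_n$ independent Bernoulli variables with parameter $p_n=\Prob{X_n>\gamma C_{n+1}}$. On $E_A$, any such child forces $Z_{n+1}>\gamma C_{n+1}$, so for $\gamma>A$ no such child can occur after $n_0$. A conditional Borel--Cantelli argument (Lévy's extension) then gives $\sum_n Z_n p_n<\infty$ almost surely on $E_A$, and since $Z_n\ge aC_n$ there, this gives~\eqref{eq:truncation-determines-original}. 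Hence $Z=\tilde Z$ after some generation on an event of positive probability, which is of positive probability since it is the positive-probability event $E_A$. We then apply Theorem~\ref{thm:unif-integrable}\ref{item:deg-limit} to $\tilde Z$:
\begin{itemize}
\item if $\limsup 1/\tilde M_n=\infty$ then $\tilde Z_n\to 0$ almost surely, which contradicts $Z_n\ge aC_n\ge a$;
\item if~\eqref{eq:truncation-converges} fails, then $\tilde Z_n/\tilde M_n\to 0$, and we must turn this into a contradiction with~\eqref{eq:rate-of-growth-mean-comparison}.
\end{itemize}

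\textbf{Main obstacle.} We expect the main obstacle to be this last step. It involves two points. First, Theorem~\ref{thm:unif-integrable} is stated for the series condition~\eqref{eq:l1-requirement}, whereas~\eqref{eq:truncation-converges} is an expectation. We would use the fact that $\widehat Y_n-1\le \gamma C_{n+1}\asymp \tilde M_{n+1}$, so the first term of~\eqref{eq:l1-requirement} vanishes for $K$ large and the two conditions coincide. Second, we must show $C_n\asymp\tilde M_n$. For this we would compare $\mathbb{E}[\tilde Z_n \mathbf{1}\{\cdot\}]$ with $\tilde M_n$, via uniform integrability for the upper bound. For the lower bound, a bound $Z_n\le AC_n$ holding with positive probability together with $\tilde Z_n/\tilde M_n\to W>0$ forces $\tilde M_n\lesssim C_n$. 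The reverse inequality $C_n\lesssim \tilde M_n$ requires a Heyde-type martingale argument: one shows that if $\tilde Z_n/\tilde M_n\to0$ along the growth scale, then $C_n/\tilde M_n$ must blow up, which contradicts the non-degeneracy of $Z_n/C_n$. This is where we would expect the diagonal Heyde construction mentioned in the introduction to be needed.
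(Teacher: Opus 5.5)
\textbf{The gap: the lower bound $\liminf_N C_N/\tilde M^{(\gamma)}_N>0$.} The real difficulty is the necessity of this lower bound in \eqref{eq:rate-of-growth-mean-comparison}, and you have the two halves of $C_n\asymp\tilde M^{(\gamma)}_n$ the wrong way round.

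The bound $C_n\lesssim\tilde M^{(\gamma)}_n$ is the easy one. Take $n_0=0$ in $E_A$; this is possible because $0$ and $\infty$ are absorbing. Then $Z=\tilde Z$ on $E_A$ once $\gamma>A$, and $aC_n\Prob{E_A}\le\E{\tilde Z_n\mathbf{1}_{E_A}}\le\tilde M^{(\gamma)}_n$. This needs neither uniform integrability nor a Heyde martingale; the paper instead uses that the martingale limit is a.s.\ finite.

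The hard bound is $\tilde M^{(\gamma)}_n\lesssim C_n$. Your argument for it presupposes $\tilde Z_n/\tilde M^{(\gamma)}_n\to W$ with $\Prob{E_A\cap\{W>0\}}>0$, or uniform integrability. That is circular. Once the easy bound holds, Theorem~\ref{thm:unif-integrable} says $W$ is non-degenerate exactly when \eqref{eq:truncation-converges} holds, which is what you are trying to prove.

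The scenario you must exclude is this: \eqref{eq:truncation-converges} fails, so $W=0$ a.s., and hence $C_n/\tilde M^{(\gamma)}_n\to0$ on $E_A$, while $Z_n/C_n$ stays in $[a,A]$. Note the ratio tends to $0$; it does not blow up. Your ``Heyde-type'' sentence only asserts that this is contradictory, with the direction reversed and no mechanism.

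\textbf{What is missing: Proposition~\ref{prop:annoying-part}.} The paper even remarks that it cannot avoid generating functions at this step. The argument runs as follows.
\begin{enumerate}
\item By diagonalisation, choose $(m_j)$ so that $x_n=\lim_j F_{n,m_j}(e^{-t/C_{m_j}})$ and $y_n=\lim_j F_{n,m_j}(e^{-2t/C_{m_j}})$ exist for every $n$. Then $x_n=f_n(x_{n+1})$ and $y_n=f_n(y_{n+1})$, so $x_n^{Z_n}$ and $y_n^{Z_n}$ are bounded martingales.
\item The growth-scale event and L\'evy's zero-one law give $1-x_n\asymp 1/C_n$ and $x_n-y_n\asymp 1/C_n$. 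For small $t$ they also give $R_n:=(1-x_{n+1})^{-1}\ge\gamma C_{n+1}$.
\item Set $U_n=(1-x_n)/(1-x_{n+1})$ and $V_n=(x_n-y_n)/(x_{n+1}-y_{n+1})$. Both products telescope, giving $\prod_{n=N}^{m-1}U_n\asymp C_m/C_N$ and $\sum_n(1-V_n/U_n)<\infty$.
\item The inequality $|A_k-k\mathbf{1}_{k\le(1-x)^{-1}}|\le 3(A_k-B_k)$ converts the latter into $\sum_n|1-\E{X_n\mathbf{1}_{X_n\le R_n}}/U_n|<\infty$. Since $R_n\ge\gamma C_{n+1}$, this yields $\liminf_m\prod_{n\le m}C_{n+1}/(C_n\E{Y^{(\gamma)}_n})>0$.
\end{enumerate}
Without this, or an equivalent argument, neither \eqref{eq:rate-of-growth-mean-comparison} nor, via Theorem~\ref{thm:unif-integrable}, \eqref{eq:truncation-converges} is established.

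\textbf{Smaller points.}
\begin{enumerate}
\item Your conditional Borel--Cantelli proof of \eqref{eq:truncation-determines-original} is correct. It is a valid alternative to the paper's product bound.
\item In the sufficiency part, restarting at generation $N$ from $\tilde Z_N$ individuals does not give $Z_n=\tilde Z_n$ eventually. Any truncation before $N$ leaves extra individuals in $Z$, and their untruncated descendants spoil the upper bound. You need no truncation in any generation. For example, on the band event bound the conditional probability of no truncation in generation $n$ below by $(1-p_n)^{bC_n}$, and use $p_n<1$ and $\sum_n C_np_n<\infty$, as the paper does.
\item The ``only if'' half of the rate-of-growth statement is not addressed.
\end{enumerate}
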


\begin{rmq}
    The criteria for a sequence $(C_{n})_{n \in \mathbb{N}_0}$ to be a rate of growth appears as~\cite[Lemma~1]{dsouza-rates-of-growth}, based on adapting the work of Barbour and Schuh~\cite{schuh-barbour-infinite-mean} for BGW processes with infinite mean. However, it is not immediately clear how the inverse generating functions used to define the Heyde martingale in~\cite{schuh-barbour-infinite-mean} can be adapted to the varying environments setting, and whilst it is possible this can be done, some details of this adaptation are not provided in detail in~\cite{dsouza-rates-of-growth}. The proof of Theorem~\ref{thm:rate-of-growth} uses generating functions only in the proof of the necessity of the lower bound in~\eqref{eq:rate-of-growth-mean-comparison}. To handle the varying-environment setting, we use a diagonalisation argument to construct the required Heyde martingale, thus avoiding inverses; see Proposition~\ref{prop:annoying-part}. It remains an open problem to prove the necessity of~\eqref{eq:rate-of-growth-mean-comparison} without using generating functions entirely.   {\small\ensymboldremark }
\end{rmq}

\begin{rmq}
Each of the criteria appearing in Theorem~\ref{thm:rate-of-growth} have natural interpretations. 
\begin{enumerate}
\item First, suppose that $(\tilde{Z}^{(\gamma)}_{n})_{n \in \mathbb{N}_0}$ denotes the BPVE with truncated laws $\boldsymbol{\mu}^{(\gamma)}$ such that each $\mu_{n}^{(\gamma)}$ has the law of $Y^{(\gamma)}_{n}$. By coupling this with the original process, it is evident that $(C_{n})_{n\in \mathbb{N}_0}$ is a growth scale of $(Z_{n})_{n \in \mathbb{N}_0}$ if and only if it is a growth scale of $(\tilde{Z}^{(\gamma)}_{n})_{n \in \mathbb{N}_0}$ for some $\gamma > 0$, and both processes coincide. By a Borel-Cantelli argument, is equivalent to~\eqref{eq:truncation-determines-original} being satisfied. 
\item Next, $(C_{n})_{n \in \mathbb{N}_0}$ is a growth scale of $\tilde{Z}^{(\gamma)}_{n})_{n \in \mathbb{N}_0}$ when $(\tilde{Z}^{(\gamma)}_{n}/C_{n})$ is bounded above and below, hence using a telescoping product
\begin{linenomath}
\begin{align*}
\frac{\tilde{Z}^{(\gamma)}_{n}}{C_{n}} = \prod_{j=0}^{n} \frac{\tilde{Z}^{(\gamma)}_{n}}{\tilde{Z}^{(\gamma)}_{n-1}} \times \frac{C_{n-1}}{C_{n}} & = \prod_{j=0}^{n} \frac{\sum_{j=1}^{\tilde{Z}^{(\gamma)}_{n-1}} X_{n,i} \mathbf{1}_{X_{n,i} \leq C_{n+1}}}{\tilde{Z}^{(\gamma)}_{n-1}} \times \frac{C_{n-1}}{C_{n}} 
\\ & \approx \prod_{j=0}^{n} \E{X_{n} \mathbf{1}_{X_{n} \leq C_{n+1}}} \times \frac{C_{n-1}}{C_{n}} = \frac{\tilde{M}^{(\gamma)}_{n}}{C_{n}}, 
\end{align*}
\end{linenomath}
where the approximation (at least morally, on $\tilde{Z}^{(\gamma)}_{n} \to \infty$) uses the strong law of large numbers. This implies that~\eqref{eq:rate-of-growth-mean-comparison} is satisfied. However, this logic is difficult to make precise, not least because it may be the case that $\tilde{Z}^{(\gamma)}_{n}$ is bounded. Making it rigorous requires usage of a Heyde martingale in Proposition~\ref{prop:annoying-part}.
\item The previous two steps now imply that, $(C_{n})_{n \in \mathbb{N}_0}$ is a growth scale of $(Z_{n})_{n \in \mathbb{N}_0}$ if and only if $(\tilde{M}^{(\gamma)}_{n})_{n \in \mathbb{N}_0}$ is a growth rate of $(\tilde{Z}^{(\gamma)}_{n})_{n \in \mathbb{N}_0}$. Thus, Equation~\eqref{eq:truncation-converges} follows from Theorem~\ref{thm:unif-integrable}. 
\end{enumerate}
In particular, the proof of Theorem~\ref{thm:rate-of-growth} shows that $(C_{n})_{n \in \mathbb{N}_0}$ is a growth scale for $(Z_{n})_{n \in \mathbb{N}_0}$ if and only if the sequence $(\tilde{M}^{(\gamma)}_{n})_{n \in \mathbb{N}_0}$  is a rate of growth of $(Z_{n})_{n \in \mathbb{N}_0}$. {\small\ensymboldremark }
\end{rmq}

\begin{rmq} \label{rmq:gamma-arbitrary}
    It turns out that Equations~\eqref{eq:truncation-determines-original},~\eqref{eq:rate-of-growth-mean-comparison} and~\eqref{eq:truncation-converges} are satisfied for some $\gamma > 0$ if and only if they are satisfied for any $\gamma >0$ such that $\E{Y^{(\gamma)}_{n}} > 0$ for all $n \in \mathbb{N}_0$. This is proved rigorously in Proposition~\ref{prop:gamma-arbitrary}.  {\small\ensymboldremark }
\end{rmq}

As a final application of Theorem~\ref{thm:rate-of-growth}, we provide a novel proof of Lindvall's theorem~\cite{lindval} on the convergence of BPVEs that does not require Church's theorem~\cite{church} (this was first adapted to the defective setting in~\cite{defective-bpves}):

\begin{cor}[{\cite[Theorem 1]{lindval}}] \label{cor:lindval}
Suppose $(Z_{n})_{n\in \mathbb{N}_0}$ is a BPVE with offspring sequence $\boldsymbol{\mu}$. Then, $\lim_{n \to \infty} Z_{n}$ exists almost surely. Moreover, 
\begin{equation} \label{eq:lindval}
    \Prob{\lim_{n\to \infty} Z_{n} = 0} + \Prob{\lim_{n\to \infty} Z_{n} = \infty} = 1 \quad \text{iff} \quad \sum_{n=0}^{\infty} \left(1 - \mu_{n}\left(\{1\}\right) \right) = \infty.   
\end{equation}
\end{cor}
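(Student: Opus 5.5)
We prove Corollary~\ref{cor:lindval} by applying Theorem~\ref{thm:rate-of-growth} with the constant sequence $C_n \equiv 1$. The key observation is that, since $Z_n$ is integer valued, $Z_n/1$ stays bounded above and away from zero along the full sequence if and only if $Z_n$ is eventually constant at a finite positive value. Hence $C_n\equiv 1$ is a growth scale iff $\Prob{\lim Z_n \in (0,\infty)} > 0$.

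\emph{Existence of the limit.} Let $E_k$ be the event that $Z_n = k$ infinitely often, for $k \in \mathbb{N}$. On $E_k$, each visit to level $k$ gives the next generation a chance to leave $k$; the probability of leaving is $1 - \Prob{Z_{n+1}=k \mid Z_n = k}$. We bound this below by a constant times $1-\mu_n(\{1\})$. Indeed $\Prob{\sum_{j\le k} X_{n,j} = k} \le \Prob{\text{all }X_{n,j}=1} + \Prob{\text{not all } =1,\ \text{sum}=k}$, and a short argument gives $\Prob{\text{sum} \neq k}\geq c_k(1-\mu_n(\{1\}))$ up to degenerate cases.

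We split into two cases.
\begin{enumerate}
\item If $\sum_n (1-\mu_n(\{1\})) = \infty$: a conditional Borel--Cantelli (Lévy's extension) argument shows $E_k$ is null for every $k$. Then $\liminf Z_n \in \{0,\infty\}$, and since $0$ and $\infty$ are absorbing, $Z_n$ converges to $0$ or $\infty$. This yields the ``if'' direction of~\eqref{eq:lindval}.
\item If $\sum_n (1-\mu_n(\{1\})) < \infty$: then $\prod_n \mu_n(\{1\}) > 0$ after discarding finitely many generations, so with positive probability $Z_n = 1$ for all $n$. This gives the ``only if'' direction. Moreover, by Borel--Cantelli, a.s.\ only finitely many of the individuals alive (when $Z_n$ stays finite) deviate from one offspring. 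Making this precise uses a conditional Borel--Cantelli argument on the finitely many lineages, so $Z_n$ is eventually constant.
\end{enumerate}

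To connect with Theorem~\ref{thm:rate-of-growth}, in case 2 we check its conditions with $C_n\equiv 1$ and $\gamma\in(1,2)$:
\begin{itemize}
\item $Y_n = X_n\mathbf{1}_{X_n\le 1}$, and $\sum_n \Prob{X_n>1}<\infty$ gives~\eqref{eq:truncation-determines-original}.
\item $\E{Y_n}=\mu_n(\{1\})$, and $\tilde M_N=\prod_{j<N}\mu_j(\{1\})$ is bounded away from $0$, giving~\eqref{eq:rate-of-growth-mean-comparison} (for large indices).
\item $\widehat{Y}_n \equiv 1$, so~\eqref{eq:truncation-converges} holds trivially.
\end{itemize}
Conversely, in case 1, if $C_n\equiv 1$ were a growth scale, condition~\eqref{eq:rate-of-growth-mean-comparison} with $\tilde M_N \le \prod_{j<N}\E{X_j\mathbf{1}_{X_j\le \gamma}}$ would need to be compatible with divergence of $\sum(1-\mu_n(\{1\}))$. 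This is most cleanly excluded via the direct Borel--Cantelli argument above.

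The main obstacle is the uniform lower bound on the probability of leaving level $k$ in terms of $1-\mu_n(\{1\})$. The difficulty arises when $\mu_n$ mixes $0$ and $2$ so that the sum equals $k$ with high probability. The fix is to note that $\Prob{\text{sum}=k}\le 1-c_k\,\Prob{X_n\neq 1}$ holds with $c_k>0$ depending only on $k$. This follows by conditioning on the first coordinate differing from $1$ and using independence: changing which coordinate deviates changes the sum parity or magnitude with probability bounded below.
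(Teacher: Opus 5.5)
\textbf{The case~1 argument does not give what you claim.} Write $q_n:=1-\mu_n(\{1\})$ and $A_{n+1}:=\{Z_n=k,\ Z_{n+1}\neq k\}$.

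Your one-step bound is correct. For instance $c_k=4^{-k}$ works. If $\mu_n(\{1\})\ge 1/2$, use the event that exactly one of the $k$ offspring counts differs from $1$. Otherwise $\Prob{X_n=0}$ or $\Prob{X_n\ge 2}$ exceeds $1/4$, and all $k$ counts landing in that set forces a sum $\neq k$.

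L\'evy's conditional Borel--Cantelli lemma, however, only yields
\[
\{A_n \text{ i.o.}\}=\Big\{\sum_n q_n\mathbf 1_{\{Z_n=k\}}=\infty\Big\}\quad\text{a.s.}
\]
This excludes $\{Z_n=k \text{ for all large } n\}$ and nothing more. On $E_k\setminus\{Z_n=k\text{ eventually}\}$ the process leaves $k$ infinitely often, exactly as the bound predicts, so there is no contradiction. Nothing in your argument stops $Z$ from leaving level $k$ and returning infinitely often, for example by bouncing between $1$ and $2$, or by making ever longer excursions upward. So neither ``$E_k$ is null'' nor ``$\liminf Z_n\in\{0,\infty\}$'' follows.

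What you have actually shown is that \emph{if} $\lim Z_n$ exists, then in case~1 it lies in $\{0,\infty\}$. Existence of the limit, i.e.\ the absence of perpetual oscillation among finite levels, is the real content of the statement, and your proposal does not address it. Your ``main obstacle'' is therefore misplaced, since the one-step bound is elementary. Your argument would suffice only where convergence is known a priori: $\mu_n(\{0\})=0$ for all $n$ (then $Z$ is monotone), or $\E{X_n}=1$ (then $Z$ is a martingale).

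The paper supplies the missing no-oscillation statement in two pieces, and this is exactly where Theorem~\ref{thm:rate-of-growth} is needed, in the direction you set aside.

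\emph{Bounded oscillation}, $\{0<\liminf Z_n\le\limsup Z_n<\infty\}$, is excluded in case~1 by the necessity half of Theorem~\ref{thm:rate-of-growth} with $C_n\equiv 1$. Condition~\eqref{eq:rate-of-growth-mean-comparison} forces $\E{Y^{(\gamma)}_n}$ to be bounded above and below. Then~\eqref{eq:truncation-converges} with~\eqref{eq:truncation-determines-original} gives $\sum_n\Prob{X_n\ge 2}<\infty$, and~\eqref{eq:one-side-tail} gives $\sum_n\Prob{X_n=0}<\infty$. This rests on Proposition~\ref{prop:annoying-part} and Theorem~\ref{thm:unif-integrable}.

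\emph{Unbounded oscillation}, $\{\liminf Z_n<\infty=\limsup Z_n\}$, is excluded by Claim~\ref{clm:last}. It uses the martingale built from extinction probabilities and the nondecreasing count $I_n$ of individuals with an infinite line of descent.

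Your case~2 needs this second piece too. ``Only finitely many deviations'' requires $\sum_n Z_n q_n<\infty$, which you have on $\{\sup_n Z_n<\infty\}$ but not on $\{\liminf Z_n<\infty=\limsup Z_n\}$. There you need Claim~\ref{clm:last} or a L\'evy zero-one argument: at times with $Z_n\le K$, the conditional probability of no further deviation is at least $\big(\prod_{m\ge n}\mu_m(\{1\})\big)^K\to 1$.

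Finally, ``discarding finitely many generations'' gives $\Prob{\lim Z_n\in(0,\infty)}>0$ only if $\Prob{Z_N\in\mathbb N}>0$ for the relevant $N$. For $\mu_0=\delta_0$ and $\mu_n=\delta_1$ ($n\ge 1$), the ``only if'' part of~\eqref{eq:lindval} actually fails. So a nondegeneracy assumption is needed there; this is a defect of the statement rather than of your approach.
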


\subsection{Organisation of the paper}

The paper is organised as follows. In Section~\ref{sec:unif-int} we prove the Kesten--Stigum type criterion, Theorem~\ref{thm:unif-integrable}. In Section~\ref{sec:growth-scale} we prove the growth-scale criterion, Theorem~\ref{thm:rate-of-growth}, together with the auxiliary results needed to justify the truncation and comparison steps. In Section~\ref{sec:cor} we prove Corollary~\ref{cor:lindval}. The proof of Corollary~\ref{cor:lindval} can be read independently of most of the preceding section.

\section{Proof of Theorem~\ref{thm:unif-integrable}} \label{sec:unif-int}
\subsection{Spine construction}
 It will be helpful to use \emph{Ulam-Harris} labelling of trees, where vertices in rooted trees are labelled by elements of $\mathcal{U}:= \bigcup_{n=0}^{\infty} \mathbb{N}^{n}$, with $\mathbb{N}^{0} := \left\{\varnothing\right\}$. We write elements $(u_1, \ldots, u_{m}) \in \mathcal{U}$ as $u_1 \cdots u_{m}$ and interpret $u = u_1 \cdots u_m$ as the $u_{m}$th \emph{child} of $u_1 \cdots u_{m-1}$. We use $|\cdot|$ to measure the \emph{length} of a tuple $u$, so that, if $u = \varnothing$ we set $|u| = 0$, whilst if $u = u_{1} \cdots u_{k}$ then $|u| = k$. Next, for $u \in \mathcal{U}$ and $k \in \mathbb{N}_0$, $k \leq |u|$, we write $u_{|k} : = u_1 \cdots u_{k}$ for the $k$th \emph{ancestor} of $u$. We say that $T \subseteq \mathcal{U}$ is a \emph{(sibling closed) tree}, if $\varnothing \in T$, and if $u = u_1 \cdots u_{m} \in T$, we have $u_{1} \cdots u_{k} \in T$ for all $k \leq m$, and $u_{1} \cdots u_{m-1} \ell \in T$ for all $\ell \leq u_{m}$. Given $u = u_1 \cdots u_m$ and $v = v_1 \cdots v_n\in \mathcal{U}$, we denote by $uv:= u_1 \cdots u_m v_1 \cdots v_n \in \mathcal{U}$. Here, we adopt the convention $\varnothing v = v = v \varnothing$. Then, for a tree $T$, given $u \in T$, we denote by $T^{\downarrow u}$ the subtree rooted at $u$, that is, the tree $\left\{v \in \mathcal{U}\colon uv \in T\right\}$. 

We identify trees as graphs in the natural way, and denote by $\mathscr{T}$ the space of locally finite trees. Given a tree $T \in \mathscr{T}$ and $n \in \mathbb{N}_{0}$, we denote by $T_{|n}:= \left\{v \in T\colon |v| \leq n\right\}$ the subtree obtained by retaining vertices of distance at most $n$ from the root. For $T \in \mathscr{T}$, we denote by $[T]_{n} := \left\{T' \in \mathscr{T}\colon T'_{|n} = T_{|n}\right\}$. Note that if one views $\mathscr{T}$ as a topological space with the local topology - so that sets of the form $[T]_{n}$ are open - then $\mathscr{T}$ is Polish, for example, metrisable with the metric $d$ such that $d(T,T') := \left(1 + \sup\{n \in \mathbb{N}\colon T_{|n} = T'_{|n} \}\right)^{-1}$.
We denote by $Z_{n}\colon \mathscr{T} \rightarrow \mathbb{N}_0$ the function describing the size of the $n$th generation of the tree, so that
\[Z_{n}(T):= |\left\{u \in T\colon |u|=n \right\}|.\]

We denote by $\mathscr{T}_{\spine}$ the space of infinite locally finite trees with a distinguished infinite path belonging to the tree, i.e. 
\[
\mathscr{T}_{\spine} := \left\{(T, \boldsymbol{v})\colon  T \in \mathscr{T}, \boldsymbol{v} = (v_1, v_2, \ldots) \in \mathbb{N}^{\mathbb{N}} \text{ and } \, \forall i \in \mathbb{N} \; v_1\cdots v_i \in T \right\}
\] 
For $(T, \boldsymbol{v}) \in \mathscr{T}_{\spine}$, and $i \in \mathbb{N}$ we call $v_1 \cdots v_{i}$ the $i$th \emph{spine node}, with $\varnothing$ the $0$th \emph{spine node}. 
Finally, for $n \in \mathbb{N}$ we denote by $[T,\boldsymbol{v}]_{n} := \left\{(T', \boldsymbol{v}') \in \mathscr{T}_{\spine}\colon T' \in [T]_{n}, \; v'_1 \cdots v'_n = v_1 \cdots v_n\right\}.$ Note that $\mathscr{T}_{\spine}$ is also Polish with the topology making each $[T,\boldsymbol{v}]_{n}$ open. We view $\mathscr{T}, \mathscr{T}_{\spine}$ as measure spaces with their Borel $\sigma$-algebras $\mathscr{F}, \mathscr{S}$ respectively. For each $n \in \mathbb{N}$ we denote by $\mathscr{F}_{n} := \sigma\left([T]_{n}\colon T \in \mathscr{T}\right)$ and $\widehat{\mathscr{F}}_{n} := \sigma\left([T, \boldsymbol{v}]_{n}\colon T \in \mathscr{T}_{\spine}\right)$.  

Note that by adding children to the tree in the natural way, for a given sequence of laws $\boldsymbol{\mu}$, we may view a BPVE as a $\mathscr{T}$ valued random variable. We denote such a random variable by $\mathcal{T}$ (excluding the dependence on $\mu$ for simplicity of notation) and denote by $\Prob[\boldsymbol{\mu}]{\cdot}$ the pullback measure on $\mathscr{T}$ associated with $\mathcal{T}$. Suppose that $\mathcal{S}$ denotes the shift operator on sequences, so that if $\boldsymbol{\mu} = (\mu_0, \mu_1, \ldots)$, $\mathcal{S}\boldsymbol{\mu} = (\mu_1, \mu_2, \ldots)$. We denote by $\mathcal{S}^{n}$ the $n$-fold composition of shifts, i.e. $\mathcal{S}^n\boldsymbol{\mu} = (\mu_{n}, \mu_{n+1}, \ldots)$. 

We now construct a branching process in varying environments with a spine, in a manner analogous to previous constructions in, e.g.~\cite{yaglom-limit-law}. In particular, we give the root $\widehat{X}_0$ children, where $\widehat{X}_0$ has the size-biased distribution of $X_0$ (a random variable with law $\mu_0$). We sample a child uniformly at random to continue the spine, and attach to each non-spine child independently a sub-tree sampled from $\Prob[\mathcal{S}\boldsymbol{\mu}]{\cdot}$. The spine construction proceeds iteratively: if the current spine individual is in generation $n$, then it produces $\widehat{X}_n$ children, where $\widehat{X}_n$ has the size-biased distribution of $X_n$ with law $\mu_n$. Again, one of these children is chosen uniformly to extend the spine, while every other child independently starts an attached subtree with law $\Prob[\mathcal{S}^{n+1}\boldsymbol{\mu}]{\cdot}$. The spine construction leads to a random tree $\widehat{T}$ taking values in $\mathscr{T}_{\spine}$. For a given sequence of laws $\boldsymbol{\mu}$, we denote by $\Probtwo[\boldsymbol{\mu}]{\cdot}$ the pullback measure associated with this $\mathscr{T}_{\spine}$-valued random variable. Abusing notation, we also denote by $\Probtwo[\boldsymbol{\mu}]{\cdot}$ the induced measure on $\mathscr{T}$, obtained by integrating over possible distinguished paths $\boldsymbol{v}$. Finally, recalling that $\mathscr{F}_{n}$, denotes the sub-$\sigma$-algebra $\sigma([T]_{n}, T \in \mathscr{T})$,  corresponding to information about the first $n$ generations of a tree, we denote by $\mathbb{P}_{\boldsymbol{\mu} \restriction \mathscr{F}_n}$ and $\mathbb{Q}_{\boldsymbol{\mu} \restriction \mathscr{F}_n}$ the restrictions of the measures $\mathbb{P}_{\boldsymbol{\mu}}, \mathbb{Q}_{\boldsymbol{\mu}}$ to $\mathscr{F}_{n}$. 

\subsection{Proof of Item~\ref{item:non-deg-limit} of Theorem~\ref{thm:unif-integrable}}
In this section we use a spine argument to prove Item~\ref{item:non-deg-limit} of Theorem~\ref{thm:unif-integrable}. We note that this result follows a straightforward extension of the approach of~\cite{conceptual-xlogx} to the varying environments case, however, we include the proof for completeness. First, we have the following two lemmas:

\begin{lem} \label{lem:change-of-measure}
    As measures on $\mathscr{T}$, for each $n \in \mathbb{N}_0$ we have 
\begin{equation} \label{eq:change-of-measure}
\mathbb{Q}_{\boldsymbol{\mu} \restriction \mathcal{F}_n} \ll \mathbb{P}_{\boldsymbol{\mu} \restriction \mathcal{F}_n}, \text{ with } \frac{\dd \mathbb{Q}_{\boldsymbol{\mu} \restriction \mathcal{F}_n}}{\dd \mathbb{P}_{\boldsymbol{\mu} \restriction \mathcal{F}_n}} = \frac{Z_{n}}{M_n}.
\end{equation}
In particular, if we write $W :=  \limsup_{n \to \infty} \frac{Z_{n}}{M_n}$, for any $S \in \mathscr{F}$ we have  
\begin{equation} \label{eq:limiting-decomposition}
\mathbb{Q}_{\boldsymbol{\mu}}(S) = \int_{S} W \mathbb{P}_{\boldsymbol{\mu}} + \mathbb{Q}_{\boldsymbol{\mu}}\left(S \cap \{W = \infty\}\right). 
\end{equation}
\end{lem}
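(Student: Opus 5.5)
The plan is to verify the Radon--Nikodym identity~\eqref{eq:change-of-measure} directly on the generating sets $[T]_n$ of $\mathscr{F}_n$, and then obtain~\eqref{eq:limiting-decomposition} from the standard Lebesgue decomposition for a pair of measures along a filtration. Since $\mathscr{F}_n$ is generated by the countable partition $\{[T]_n\}$, it suffices to show
\[
\mathbb{Q}_{\boldsymbol{\mu}}([T]_n) = \frac{Z_n(T)}{M_n}\,\mathbb{P}_{\boldsymbol{\mu}}([T]_n)
\]
for every finite tree $T_{|n}$. Under $\mathbb{P}_{\boldsymbol{\mu}}$, the probability of $[T]_n$ is $\prod_{|u|<n}\mu_{|u|}(\{k_u\})$, where $k_u$ denotes the number of children of $u$ in $T$.

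Under $\mathbb{Q}_{\boldsymbol{\mu}}$ we first compute the probability of $[T,\boldsymbol{v}]_n$ for a fixed spine path $v_1\cdots v_n$ with $|v_1\cdots v_n| = n$ in $T$. Each spine node $w=v_{|j}$, $j<n$, contributes the size-biased factor $k_w\mu_j(\{k_w\})/m_j$, together with the uniform choice of the next spine child, which contributes $1/k_w$. These combine to $\mu_j(\{k_w\})/m_j$. Every non-spine vertex $u$ with $|u|<n$ lies in an independent subtree with shifted law $\mathbb{P}_{\mathcal{S}^{j}\boldsymbol{\mu}}$, so it contributes $\mu_{|u|}(\{k_u\})$. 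Multiplying these factors gives
\[
\mathbb{Q}_{\boldsymbol{\mu}}([T,\boldsymbol{v}]_n)=\mathbb{P}_{\boldsymbol{\mu}}([T]_n)/M_n .
\]
Summing over the $Z_n(T)$ possible choices of the $n$th spine node then yields the identity. If $Z_n(T)=0$, the spine tree cannot lie in $[T]_n$, so both sides vanish. This gives absolute continuity on $\mathscr{F}_n$ with the stated density.

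For~\eqref{eq:limiting-decomposition}, we treat $\mathbb{P}_{\boldsymbol{\mu}}$ and $\mathbb{Q}_{\boldsymbol{\mu}}$ as probability measures on $(\mathscr{T},\mathscr{F})$ with $\mathscr{F}=\sigma(\bigcup_n\mathscr{F}_n)$, and set $W_n = Z_n/M_n$. The standard result, as in Durrett's Theorem 4.3.5 or the appendix of Lyons--Pemantle--Peres, states the following. If $\mathbb{Q}\restriction\mathscr{F}_n \ll \mathbb{P}\restriction\mathscr{F}_n$ with density $W_n$, then $W_n$ converges $\mathbb{P}$-a.s. to a finite limit as a nonnegative martingale. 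Moreover, $1/W_n$ is a nonnegative $\mathbb{Q}$-supermartingale, so $W_n$ converges $\mathbb{Q}$-a.s. in $[0,\infty]$. Finally, $\mathbb{Q}(S)=\int_S \limsup W_n\, \dd\mathbb{P} + \mathbb{Q}(S\cap\{\limsup W_n=\infty\})$.

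I would reprove this decomposition briefly. Let $\rho=(\mathbb{P}+\mathbb{Q})/2$, and let $p_n, q_n$ be the densities of $\mathbb{P}$ and $\mathbb{Q}$ with respect to $\rho$ on $\mathscr{F}_n$. These are bounded $\rho$-martingales, so by Lévy's theorem they converge to the full densities $p, q$. On $\{p_n>0\}$ we have $W_n=q_n/p_n$, so $W:=\limsup W_n = q/p \in[0,\infty]$ $\rho$-a.s. The Lebesgue decomposition $\mathbb{Q}(S)=\int_S (q/p)\, \dd\mathbb{P}+\mathbb{Q}(S\cap\{p=0\})$ then gives the claim, since $\{p=0\}=\{W=\infty\}$ up to $\rho$-null sets.

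The main obstacle is bookkeeping rather than depth, and there are two points to handle. The first is the defective case: offspring values of $\infty$ have zero size-biased weight, and trees with infinite vertices lie outside $\mathscr{T}$. I would restrict to locally finite $T$ and note that $\mathbb{Q}$ charges only locally finite trees whenever the $m_n$ are finite. The second is the convention $W_n=\infty$ where $p_n=0$; this requires care to ensure that $\limsup$ rather than $\lim$ is the right object on null sets.
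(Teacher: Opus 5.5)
Your proposal is correct and takes essentially the same approach as the paper: show $\mathbb{Q}_{\boldsymbol{\mu}}([T,\boldsymbol{v}]_n)=\mathbb{P}_{\boldsymbol{\mu}}([T]_n)/M_n$ on cylinder sets, sum over the $Z_n(T)$ possible spine endpoints, and then apply the standard Lebesgue decomposition along a filtration. The only differences are cosmetic: you write the cylinder probability directly as a product over vertices, whereas the paper peels off the first generation and inducts in the shifted environment $\mathcal{S}\boldsymbol{\mu}$, and you sketch the decomposition via $\rho=(\mathbb{P}_{\boldsymbol{\mu}}+\mathbb{Q}_{\boldsymbol{\mu}})/2$, whereas the paper simply cites it from Lyons.
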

\begin{proof}
Suppose that $(T, \boldsymbol{v}) \in \mathscr{T}_{\spine}$, and denote by $\mathcal{S}\boldsymbol{v}$ the sequence $(v_2, v_3, \ldots)$. First note that, for any such $(T, \boldsymbol{v})$, if $Z^{T}_{1} = J \in \mathbb{N}$
\[
\Probtwo[\boldsymbol{\mu}]{[T, \boldsymbol{v}]_1} = \Prob{\widehat{X}_0 = J} \times \Prob{v_1 \text{ selected to continue spine}} = \frac{1}{J} \times J \times\frac{\Prob{X_0 = J}}{\E{X_0}} = \frac{\Prob[\boldsymbol{\mu}]{[T]_1}}{M_1}.
\]
Then, by induction, and the conditional independence of subtrees associated with the process, again assuming that $Z^{T}_{1} = J \in \mathbb{N}$, note that we have
\begin{linenomath}
\begin{align}
\nonumber \Probtwo[\boldsymbol{\mu}]{[T, \boldsymbol{v}]_{n}} & = \Prob{\widehat{X}_{0} = J} \times \frac{1}{J} \times \Probtwo[\mathcal{S}\boldsymbol{\mu}]{[T^{\downarrow v_1}, \mathcal{S}\boldsymbol{v}]_{n-1}} \times \prod_{r \neq v_{1}} \Prob[\mathcal{S}\boldsymbol{\mu}]{[T^{\downarrow r}]_{n-1}} 
\\ & = \frac{\Prob{X_0 = J} \Prob[\mathcal{S}\boldsymbol{\mu}]{[T^{\downarrow v_1}]_{n-1}}}{M_n} \times \prod_{r \neq v_{1}} \Prob[\mathcal{S}\boldsymbol{\mu}]{[T^{\downarrow r}]_{n-1}} = \frac{\Prob[\boldsymbol{\mu}]{[T]_{n}}}{M_n}
\end{align}
\end{linenomath}
We deduce that $\Probtwo[\boldsymbol{\mu}]{[T]_n} = \frac{Z^{T}_{n}}{M_n} \Prob[\boldsymbol{\mu}]{[T]_n}$ 
so that~\eqref{eq:change-of-measure} follows. Finally,~\eqref{eq:limiting-decomposition} follows from~\eqref{eq:change-of-measure} and a known measure decomposition result (see e.g.~\cite[Equation~(12.4)]{lyons}).
\end{proof}

\begin{lem}[{\cite[Theorem~6.2]{petrov1995limit}}] \label{lem:three-series-positive}
    Suppose that $(S_{j})_{j \in \mathbb{N}}$ is a sequence of mutually independent random variables taking values in $[0, \infty)$. Let $K > 0$ be given. Then $\sum_{j=1}^{\infty} S_{j} < \infty$ almost surely if and only if 
    \begin{equation} \label{eq:kolmogorov-three-series-positive}
        \sum_{j=1}^{\infty} \Prob{S_{j} > K} < \infty \quad \text{ and } \quad \sum_{j=1}^{\infty} \E{S_{j} \mathbf{1}_{S_{j} \leq K}} < \infty. 
    \end{equation}
    \hfill $\blacksquare$
\end{lem}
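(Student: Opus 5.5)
The plan is to prove the two directions separately, using only Borel--Cantelli and a Laplace-transform bound. Write $T_j := S_j \mathbf{1}_{S_j \leq K}$ for the truncated variables. These are independent, take values in $[0,K]$, and agree with $S_j$ except on the events $\{S_j > K\}$.

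\emph{Sufficiency.} Assume both series in~\eqref{eq:kolmogorov-three-series-positive} converge.
\begin{itemize}
\item By the first Borel--Cantelli lemma, $\sum_j \Prob{S_j > K} < \infty$ implies that almost surely only finitely many $j$ have $S_j \neq T_j$.
\item By monotone convergence, $\E{\sum_j T_j} = \sum_j \E{T_j} < \infty$, so $\sum_j T_j < \infty$ almost surely.
\end{itemize}
Since $\sum_j S_j$ differs from $\sum_j T_j$ by finitely many finite terms, it is finite almost surely.

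\emph{Necessity, first series.} Suppose $\sum_j S_j < \infty$ almost surely. Then $S_j \to 0$ almost surely, so the events $\{S_j > K\}$ occur only finitely often almost surely. These events are independent, so the second Borel--Cantelli lemma forces $\sum_j \Prob{S_j > K} < \infty$. As a consequence, $\sum_j T_j$ also differs from $\sum_j S_j$ in only finitely many terms, so $\sum_j T_j < \infty$ almost surely.

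\emph{Necessity, second series.} This is the step I expect to be the main one; the tool is the Laplace transform.
\begin{itemize}
\item Since $\sum_j T_j < \infty$ almost surely, $e^{-\sum_j T_j} > 0$ almost surely, and so $\E{e^{-\sum_j T_j}} > 0$.
\item By independence and monotone convergence,
\[
\E{e^{-\sum_j T_j}} = \prod_{j} \E{e^{-T_j}}.
\]
\item By convexity of $x \mapsto e^{-x}$, on $[0,K]$ we have $e^{-x} \leq 1 - c x$ with $c := (1-e^{-K})/K \in (0,1)$. Hence $\E{e^{-T_j}} \leq 1 - c\,\E{T_j}$, where each factor satisfies $0 \le c\,\E{T_j} \le 1-e^{-K} < 1$.
\item Therefore
\[
0 < \prod_j \bigl(1 - c\,\E{T_j}\bigr) \leq \exp\Bigl(-c \sum_j \E{T_j}\Bigr).
\]
If $\sum_j \E{T_j}$ diverged, the right-hand side would be $0$, a contradiction. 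So $\sum_j \E{T_j} < \infty$, which completes the equivalence.
\end{itemize}
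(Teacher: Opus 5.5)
Your proof is correct and complete. It takes a different route from the paper only because the paper gives no proof of this lemma. It simply quotes it from Petrov as a case of Kolmogorov's three-series theorem. For non-negative summands the third (variance) condition of that theorem is automatic, since $\Var{S_j\mathbf{1}_{S_j\le K}}\le K\,\E{S_j\mathbf{1}_{S_j\le K}}$.

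Your argument is self-contained and uses non-negativity directly:
\begin{itemize}
\item For sufficiency you combine Borel--Cantelli with monotone convergence, so no variance bound or maximal inequality is needed.
\item For the truncated-mean condition you use the Laplace-transform bound $0<\E{e^{-\sum_j T_j}}=\prod_j\E{e^{-T_j}}\le\exp(-c\sum_j\E{T_j})$. In the general theorem this step is usually handled differently: one first shows $\sum_j\Var{T_j}<\infty$ for uniformly bounded summands (via Kolmogorov's reverse maximal inequality or symmetrisation), and then applies the two-series theorem.
\end{itemize}
The citation buys generality, since it covers signed summands, where the variance condition is genuinely needed. Your route is more elementary and shows clearly why only two conditions appear in the non-negative setting the paper uses.

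One small simplification: you do not need the first series to conclude $\sum_j T_j<\infty$ almost surely, because $0\le T_j\le S_j$ gives it directly.
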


Since the proof of Theorem~\ref{thm:unif-integrable} uses the spine process $(\widehat{Z}_{n})_{n \in \mathbb{N}_0}$, we introduce some notation, and compute some relevant quantities related to this process. First, suppose that we denote by $(N_{i})_{i = 0, \ldots, n-1}$ the number of non-spine children of the $i$th spine node in $(\widehat{Z}_{n})_{n \in \mathbb{N}}$ (where we recall that the root is the $0$th spine node). Denote by $(\widehat{Z}^{(\ell)}_{n, i})_{\ell= 1, \ldots, N_{i}}$ the contribution descendents of each of the $N_{i}$ children make to $\widehat{Z}_{n}$. Since the spine always contributes one, we have the decomposition
\begin{equation} \label{eq:spine-sum-decomp}
    \widehat{Z}_{n} = 1 +  \sum_{i=0}^{n-1} \sum_{\ell = 1}^{N_{i}} \widehat{Z}^{(\ell)}_{n,i}. 
*\end{equation}
By the dynamics of $(\widehat{Z}_{n})_{n \in \mathbb{N}}$, for $n > i$, each $\widehat{Z}^{(\ell)}_{n, i}$ is distributed like $\tilde{Z}^{(i)}_{n-(i+1)}$, where $(\tilde{Z}^{(i)}_{n})_{n \in \mathbb{N}}$ is a BPVE with offspring sequence $\mathcal{S}^{i+1}\boldsymbol{\mu}$. Moreover, since the $i$th spine node has $\widehat{X}_{i}$ offspring, one of which is the next node on the spine, by independence of $N_{i}$ and the processes $\widehat{Z}^{(\ell)}_{n,i}$ we have 
\[
\E{\sum_{\ell = 1}^{N_{i}} \widehat{Z}^{(\ell)}_{n,i} \, \bigg | \, N_{i}} = N_i \E{\widehat{Z}^{(\ell)}_{n,i}} =  \left(\widehat{X}_{i} - 1 \right) \E{\widehat{Z}^{(\ell)}_{n,i}} = \left(\widehat{X}_{i} - 1\right) \left(\prod_{j=i+1}^{n-1} m_{j} \right).
\] 
Therefore, if $\widehat{Z}_{n,i} := \sum_{\ell = 1}^{N_{i}} \widehat{Z}^{(\ell)}_{n, i}$, we have 
\begin{equation} \label{eq:martingale-property-requirement}
\frac{\E{\widehat{Z}_{n,i} \, \big | \, N_{i}}}{M_n} = \frac{N_{i}}{M_{i+1}} = \frac{\widehat{X}_i - 1}{M_{i+1}}. 
\end{equation}

\begin{proof}[Proof of Item~\ref{item:non-deg-limit} of Theorem~\ref{thm:unif-integrable}]
Following the notation defined above, note that each of the values $(N_{i})_{i \in \mathbb{N}_0}$ are independent, and that each $\widehat{Z}^{(\ell)}_{n, i}$ is independent of $N_i$. Thus, if we denote by $\mathscr{Y} := \sigma(N_{i};{i \in \mathbb{N}_0})$ the $\sigma$-algebra generated by the values of $(N_{i})_{i \in \mathbb{N}_0}$, we have
\begin{linenomath}
    \begin{align*}
        \E{\frac{\widehat{Z}_{n} - 1}{M_{n}} \, \bigg | \, \widehat{\mathscr{F}}_{n-1},\mathscr{Y}} = 
        \frac{N_{n-1}}{M_{n}} + \sum_{i=0}^{n-2} \sum_{\ell = 1}^{N_{i}} \E{\frac{\widehat{Z}^{(\ell)}_{n, i}}{M_n} \, \bigg | \, \widehat{\mathscr{F}}_{n-1}, \mathscr{Y}} \geq \sum_{i=0}^{n-2} \sum_{\ell = 1}^{N_{i}} \frac{\widehat{Z}^{(\ell)}_{n-1, i}}{M_{n-1}} = \frac{\widehat{Z}_{n-1} - 1}{M_{n-1}}.
    \end{align*}
\end{linenomath}
Thus, conditional on $\mathscr{Y}$, $\left(\left(\widehat{Z}_{n} - 1\right)/M_{n}\right)_{n \in \mathbb{N}_0}$ is a sub-martingale. Note also that by~\eqref{eq:martingale-property-requirement}
\begin{equation} \label{eq:expected-value-infinite-sum}
\E{\frac{\widehat{Z}_{n} - 1}{M_{n}}\, \bigg | \, \mathscr{Y}} \leq \sum_{i=0}^{\infty} \frac{N_{i}}{M_{i+1}} < \infty, \quad {\text{almost surely,}} 
\end{equation}
where the last step follows from Lemma~\eqref{lem:three-series-positive}, the fact that $N_{i} = \widehat{X}_{i}-1$ and the assumption that~\eqref{eq:l1-requirement} is satisfied. Thus, by the martingale convergence theorem, $\lim_{n \to \infty} \left(\widehat{Z}_{n} - 1\right)/M_{n}$ exists and is finite almost surely. Since in addition, $\limsup_{n \to \infty} 1/M_{n} = 0 $, we deduce that  
\begin{equation} \label{eq:limit-exists}
\Probtwo[\boldsymbol{\mu}]{\left\{T \in \mathscr{T}\colon \lim_{n \to \infty} \frac{Z_{n}}{M_{n}} \; \text{exists and is finite}\right\}} = 1,
\end{equation}
which implies by Equation~\eqref{eq:limiting-decomposition} that $\E{W} = \E[\mathbb{P}]{\lim_{n \to \infty} Z_{n}/M_{n}} = \Probtwo[\boldsymbol{\mu}]{\mathscr{T}} = 1$. Thus, $L^1$ convergence follows from the almost sure convergence of $Z_{n}/M_{n}$ and Scheff\'e's lemma. Moreover, by~\eqref{eq:limiting-decomposition}
\begin{linenomath}
    \begin{align} \label{eq:w-is-zero}
        \Probtwo[\boldsymbol{\mu}]{\left\{W = 0\right\}} = \E[\mathbb{P}_{\boldsymbol{\mu}}]{W\mathbf{1}_{W = 0}} + \Probtwo[\boldsymbol{\mu}]{\{W = 0\} \cap \{W = \infty\}} = 0.   
    \end{align}
\end{linenomath}
Thus,
\begin{linenomath}
    \begin{align}
\Prob{W > 0} = \E[\mathbb{P}_{\boldsymbol{\mu}}]{\frac{1}{W} \times W \mathbf{1}_{W >0}} \stackrel{\eqref{eq:limiting-decomposition}}{=} \E[\mathbb{Q}_{\boldsymbol{\mu}}]{\frac{1}{W} \mathbf{1}_{W >0}} \stackrel{\eqref{eq:w-is-zero}}{=} \E[\mathbb{Q}_{\boldsymbol{\mu}}]{\frac{1}{W}}  = \E{\lim_{n \to \infty} \frac{M_{n}}{\widehat{Z}_{n}}}. 
    \end{align}
\end{linenomath}
Now, by Jensen's inequality and Fatou's lemma, 
\begin{linenomath}
 \[\E{\lim_{n \to \infty} \frac{M_{n}}{\widehat{Z}_{n}}} \geq \E{\frac{1}{\E{\lim_{n \to \infty} \widehat{Z}_{n}/M_{n} \, | \, \mathscr{Y}}}} \geq \E{\frac{1}{\liminf_{n \to \infty}\E{ \widehat{Z}_{n}/M_{n} \, | \, \mathscr{Y}}}} \geq \E{\frac{1}{\sum_{i=0}^{\infty} \frac{N_i}{M_{i+1}}}},\]
\end{linenomath}
which implies~\eqref{eq:prob-pos}. 
\end{proof}

\subsection{Proof of Item~\ref{item:deg-limit} of Theorem~\ref{thm:unif-integrable}}
The main ingredient in the proof of Item~\ref{item:deg-limit} of Theorem~\ref{thm:unif-integrable}, that differs from the classical spine argument is the following lemma:
\begin{lem} \label{lem:trunc}
Suppose that $(Z_{n})_{n \in \mathbb{N}_0}$ is a BPVE with offspring random variables $(X_{n})_{n \in \mathbb{N}_0}$ such that, for some $C >0$, for all $i \in \mathbb{N}_0$ we have $\widehat{X}_{i} \leq C M_{i+1}$ almost surely. Moreover, suppose that 
\[
\sum_{i=0}^{\infty} \frac{\E{\widehat{X}_{i} - 1}}{M_{i+1}} = \infty. 
\]
Then, $\lim_{n \to \infty} Z_n/M_n = 0$ almost surely. 
\end{lem}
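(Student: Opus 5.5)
The plan is to use the measure decomposition~\eqref{eq:limiting-decomposition} of Lemma~\ref{lem:change-of-measure}. Taking $S = \mathscr{T}$ gives $\E{W} = 1 - \mathbb{Q}_{\boldsymbol{\mu}}(W = \infty)$. So it suffices to show that $W = \infty$ holds $\mathbb{Q}_{\boldsymbol{\mu}}$-almost surely. Since $(Z_n/M_n)$ is a non-negative $\mathbb{P}_{\boldsymbol{\mu}}$-martingale, it converges almost surely, so $W = \lim Z_n/M_n$. Hence $\E{W} = 0$ gives $\lim Z_n/M_n = 0$ almost surely.

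Under $\mathbb{Q}_{\boldsymbol{\mu}}$ we use the decomposition~\eqref{eq:spine-sum-decomp}:
\[
\frac{\widehat{Z}_n}{M_n} \geq \sum_{i=0}^{n-1} \frac{\widehat{Z}_{n,i}}{M_n}.
\]
Conditional on $\mathscr{Y}$, the summands are independent. By~\eqref{eq:martingale-property-requirement}, the $i$th summand has conditional mean $N_i/M_{i+1}$.

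The first step is to show $\sum_i N_i/M_{i+1} = \infty$ almost surely. The $N_i/M_{i+1}$ are independent, non-negative and bounded by $C$. Applying Lemma~\ref{lem:three-series-positive} with $K = C$, the first series is zero and the second is $\sum_i \E{\widehat{X}_i - 1}/M_{i+1} = \infty$. So the series diverges almost surely. (Divergence of the truncated mean sum is exactly the failure of the three-series criterion, since no truncation is needed.)

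The second and main step is to upgrade divergence of the conditional means to divergence of $\widehat{Z}_n/M_n$ itself. Each block $\widehat{Z}_{n,i}/M_n$ is, up to the factor $M_{i+1}^{-1}$, a sum of $N_i$ independent copies of the martingale $\tilde{Z}^{(i)}_{n-i-1}/\prod_{j=i+1}^{n-1} m_j$. The difficulty is that these martingales may themselves degenerate. If the shifted processes have limits that vanish, the means alone do not force growth.

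To handle this, I would argue by a zero-one/self-consistency argument. Suppose $p := \mathbb{Q}_{\boldsymbol{\mu}}(W < \infty) > 0$. Then $\E[\mathbb{P}]{W} > 0$, and the same holds for every shifted environment, since $W$ for $\boldsymbol{\mu}$ decomposes over the first-generation subtrees as
\[
\frac{1}{m_0} \sum_{\ell} W^{(\ell)}
\]
with the $W^{(\ell)}$ limits for $\mathcal{S}\boldsymbol{\mu}$. More precisely, I would show that $c := \inf_i \E[\mathbb{P}_{\mathcal{S}^{i+1}\boldsymbol{\mu}}]{W} > 0$. This holds because $\E[\mathbb{P}_{\mathcal{S}^{i}\boldsymbol{\mu}}]{W} = \E[\mathbb{P}_{\mathcal{S}^{i+1}\boldsymbol{\mu}}]{W}$ by the decomposition, so the expectation does not depend on $i$.

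Granted $c > 0$, Fatou gives
\[
\liminf_n \frac{\widehat{Z}_n}{M_n} \geq \sum_i \frac{1}{M_{i+1}} \sum_{\ell=1}^{N_i} W^{(\ell)}_i,
\]
where the $W^{(\ell)}_i$ are independent copies of $W$ under $\mathbb{P}_{\mathcal{S}^{i+1}\boldsymbol{\mu}}$, each with mean $c$ and valued in $[0,\infty)$.

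Conditional on $\mathscr{Y}$, I would apply Lemma~\ref{lem:three-series-positive} to the independent terms $S_i := M_{i+1}^{-1}\sum_{\ell=1}^{N_i} W_i^{(\ell)}$. Truncating each $W_i^{(\ell)}$ at a level $L$ with $\E{W \mathbf{1}_{W\le L}} \ge c/2$ would make the truncated contribution have mean at least $(c/2) N_i/M_{i+1}$ and be bounded by $CL$. Showing this uniformly in $i$ is the delicate point; I would use the fact that the shifted $W$'s are linked by the branching decomposition and that $\widehat{X}_i \le C M_{i+1}$ bounds growth. Divergence of $\sum N_i/M_{i+1}$ then forces $\sum S_i = \infty$ almost surely, so $W = \infty$ $\mathbb{Q}_{\boldsymbol{\mu}}$-a.s. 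This contradicts $p > 0$.

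Alternatively, if $\E{W} = 0$ already, we are done directly. I expect the uniform truncation step to be the main obstacle. If it resists, the fallback is a second-moment bound for the bounded truncated variables, giving a conditional Chebyshev/Borel–Cantelli argument along blocks where $\sum N_i/M_{i+1}$ doubles.
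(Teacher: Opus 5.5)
Your opening reductions are sound: it suffices to show $W=\infty$ $\mathbb{Q}_{\boldsymbol{\mu}}$-a.s.; $\sum_i N_i/M_{i+1}=\infty$ a.s.\ by Lemma~\ref{lem:three-series-positive}; $\E[\mathbb{P}_{\mathcal{S}^{i}\boldsymbol{\mu}}]{W}$ does not depend on $i$ by Wald's identity; and the Fatou bound holds. But the step you flag as delicate is the whole content of the lemma, and you do not supply it.

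To run the three-series argument on $S_i=M_{i+1}^{-1}\sum_{\ell\le N_i}W_i^{(\ell)}$ you need a level $L$ with $\inf_i\E{W_i\mathbf{1}_{W_i\le L}}\ge c/2$. By Lemma~\ref{lem:change-of-measure} in the shifted environments, this says $\inf_i\mathbb{Q}_{\mathcal{S}^{i+1}\boldsymbol{\mu}}(W\le L)\ge c/2$. That is a uniform-in-$i$ strengthening of the very hypothesis you want to contradict. The assumption $p>0$ only gives the first moment $\E{W_i}=c$. The hypothesis $\widehat X_j\le CM_{j+1}$ gives no uniform tail control after shifting either: in the environment started at generation $i+1$ it reads $\widehat X_j\le (CM_{i+1})\,M_{j+1}/M_{i+1}$. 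This constant is unbounded when $M_n\to\infty$, so nothing prevents a fixed fraction of the mean of $W_i$ from being carried by values of order $M_{i+1}$.

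Your fallback has the same defect. A second-moment bound for $W_i\wedge L$ still needs a uniform lower bound on $\E{W_i\wedge L}$.

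More fundamentally, passing to the subtree limits discards exactly what is needed. The lemma is true, and by your own identity $\E{W_i}=\E[\mathbb{P}_{\boldsymbol{\mu}}]{W}$, so in fact every $W_i^{(\ell)}$ vanishes and your Fatou lower bound is identically $0$. The argument then rests entirely on a counterfactual hypothesis that carries only first-moment information.

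The paper never takes limits in the subtrees; it keeps $n$ finite.
\begin{itemize}
\item It works with $U_{n,i}=\widehat Z_{n,i}/M_n$. These are independent in $i$, have exact mean $u_i=\E{\widehat X_i-1}/M_{i+1}$, and so have no degeneracy problem.
\item By Fearn's formula (Lemma~\ref{lem:variance}) together with $\widehat X_i\le CM_{i+1}$, their variance is at most $Cu_i^2+\bigl(C+\sum_{j=i+1}^{n-2}u_j\bigr)u_i$.
\item The variance of early summands grows with the later cumulative mean. To handle this, Lemma~\ref{lem:summands-diverge} groups indices into $k$ consecutive blocks whose mean sums satisfy $s_j\ge M$ and $s_j\ge C+\sum_{\ell>j}s_\ell$ (Claim~\ref{clm:div-sum-ineq}). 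Each block sum then has variance at most $(C+2)s_j^2$.
\item Cantelli's inequality bounds the probability that a block sum is at most $M/2$ by $\frac{4(C+2)}{4(C+2)+1}$.
\item Independence across blocks makes $\Prob{\sum_{i<n}U_{n,i}\le M/2}$ exponentially small in $k$, which gives $\limsup_n\widehat Z_n/M_n=\infty$ $\mathbb{Q}_{\boldsymbol{\mu}}$-a.s.
\end{itemize}
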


We first use Lemma~\ref{lem:trunc} to complete the proof of Theorem~\ref{thm:unif-integrable}, then prove Lemma~\ref{lem:trunc} in the following section. 

\begin{proof}[Proof of Item~\ref{item:deg-limit} of Theorem~\ref{thm:unif-integrable}]
First suppose that $\limsup_{n \to \infty} 1 / M_{n} = \infty$. Then, we may find a subsequence $(M_{n_{i}})_{i \in \mathbb{N}}$ such that $\sum_{i=1}^{\infty} M_{n_i} < \infty$. By Markov inequality 
\[
\Prob{Z_{n_i} > 0} \leq M_{n_i},
\]
thus, by the Borel-Cantelli lemma, $\Prob{Z_{n_i} > 0 \text{ for infinitely many $i$}} = 0$. This implies \[\lim_{n \to \infty} Z_{n} = 0 \quad  \text{almost surely}.\] 
Next, we consider the case that the sum in~\eqref{eq:l1-requirement} diverges, because for all $K' \in (0, \infty)$
\begin{equation} \label{eq:gen-not-xlogx}
\sum_{n=0}^{\infty} \Prob{\widehat{X}_{n} - 1\geq K' M_{n+1}} = \infty. 
\end{equation}
Following the notation regarding the spine process in the previous section, note that $\widehat{Z}_{n} > \widehat{Z}_{n,n-1} = N_{n-1}$. Thus, for any $K \in (0,\infty)$
\begin{linenomath}
\begin{align*}
    \Prob{\limsup_{n \to \infty} \frac{\widehat{Z}_n}{M_{n}} \geq K } \geq \Prob{\limsup_{n \to \infty} \frac{N_{n-1}}{M_{n}} \geq K'} = \Prob{\limsup_{n \to \infty} \frac{\widehat{X}_{n-1} - 1}{M_{n}} \geq K'} \stackrel{\eqref{eq:gen-not-xlogx}}{=} 1,
 \end{align*}
\end{linenomath}
where the last equality follows from converse of the Borel-Cantelli lemma. Since $K'$ was arbitrary, we deduce that $\limsup_{n \to \infty} \widehat{Z}_n/{M_{n}} = \infty$ almost surely, so that 
\[
\Probtwo[\boldsymbol{\mu}]{\left\{T \in \mathscr{T}\colon \limsup_{n \to \infty} \frac{Z_{n}}{M_{n}} = \infty\right\}} = 1. 
\]
Equation~\eqref{eq:limiting-decomposition} therefore implies $\Prob{\lim_{n \to \infty} \frac{Z_{n}}{M_{n}} = 0} = 1$ as required.

Otherwise, for some $K' \geq K$
\begin{align}  \label{eq:gen-not-xlogx-two} 
\sum_{n=0}^{\infty} \Prob{\widehat{X}_{n} - 1\geq K' M_{n+1}} < \infty, \quad \text{yet} \quad \sum_{n=0}^{\infty} \frac{\E{\left(\widehat{X}_{n} - 1 \right)\mathbf{1}_{\widehat{X}_n -1 \leq K' M_{n+1}}}}{M_{n+1}} = \infty. 
\end{align}
Note that the terms of the first sum in~\eqref{eq:gen-not-xlogx-two} are decreasing in $K'$ whilst the terms in the second are increasing in $K'$. Hence~\eqref{eq:gen-not-xlogx-two} is satisfied for all $K' \geq K$. Now, suppose that $(Z_{n}^{(K')})_{n \in \mathbb{N}_0}$ denotes a truncation of the original process $(Z_{n})_{n \in \mathbb{N}_0}$, with offspring laws having the distributions of the random variables $(X_{n}\mathbf{1}_{X_{n} \leq K' M_{n+1} + 1})_{n \in \mathbb{N}_0}$. Suppose that $(M^{(K')}_{n})_{n \in \mathbb{N}}$ denote the means associated with the truncated process. Then, using~\eqref{eq:gen-not-xlogx-two}, by making $K'$ larger if necessary, we may assume that $\sum_{n=0}^{\infty} \Prob{\widehat{X}_{n} - 1\geq K' M_{n+1}} < 1$, hence
\begin{linenomath}
\begin{align} \label{eq:inf-mean-ratio}
 \inf_{n \in \mathbb{N}} \left\{\frac{M_{n}^{(K')}}{M_n}\right\} & = \inf_{n \in \mathbb{N}} \left\{\prod_{j=0}^{n-1} \frac{\E{X_{j} \mathbf{1}_{X_{j} \leq K' M_{j+1}+1}}}{m_j} \right\}  = \inf_{n \in \mathbb{N}} \left\{ \prod_{j=1}^{n-1} \left(1 - \frac{\E{X_{j} \mathbf{1}_{X_j > K' M_{j+1} +1}}}{m_j}\right)\right\}
\\ \nonumber & = \inf_{n \in \mathbb{N}} \left\{ \prod_{j=0}^{n-1} \left(1 - \Prob{\widehat{X}_{j} > K' M_{j+1} + 1}\right) \right\} \geq 1 - \sum_{j=0}^{\infty} \Prob{\widehat{X}_{j} - 1\geq K' M_{j+1}} > 0.  
\end{align}
\end{linenomath}
Now, note that, since $\inf_{n \in \mathbb{N}_0} M_{n} > 0$, and the inverse of the infimum in Equation~\eqref{eq:inf-mean-ratio} is bounded from above, we can find a bound $C > 0$ such that
\[
X_{n} \mathbf{1}_{X_{n} \leq K' M_{n+1} + 1} \leq C M^{(K')}_{n+1}.  
\]
Thus, by Lemma~\ref{lem:trunc}, for all $K' \geq K$ we have $\lim_{n \to \infty} \frac{Z^{(K')}_n}{M^{(K')}_n} = 0$. 
On the other hand, we may couple $(Z_{n}^{(K')})_{n \in \mathbb{N}_0}$ and $(Z_{n})_{n \in \mathbb{N}_0}$ so that $(Z_{n}^{(K)})_{n \in \mathbb{N}_0}$ is formed from $(Z_{n})_{n \in \mathbb{N}_0}$ by removing all offspring of individuals in the $n$th generation that have more that $K'M_{n+1} + 1$ offspring. But on this coupling, by Doob's martingale inequality, 
\[
\Prob{\exists n \in \mathbb{N}\colon \; Z_{n} \neq Z^{(K')}_{n} } \leq \Prob{\exists n \in \mathbb{N}\colon \frac{Z_n}{M_{n+1}} > K'} = \Prob{\sup_{n \in \mathbb{N}_0} \frac{Z_{n}}{M_n} > K'} \leq \frac{1}{K'}
\]
Thus, 
\[
\Prob{\lim_{n \to \infty} \frac{Z_{n}}{M_{n}} = 0} \geq \Prob{\left\{ \lim_{n \to \infty} \frac{Z^{(K')}_n}{M^{(K')}_{n}} = 0 \right\} \cap \left\{\forall n \in \mathbb{N} \; Z_{n} = Z^{(K')}_{n}\right\}} \geq 1 - \frac{1}{K'}.
\]
Since $K'$ can be made arbitrarily large, the proof of Theorem~\ref{thm:unif-integrable} follows. 
\end{proof}

\subsection{Proof of Lemma~\ref{lem:trunc}}
We also start this section with two lemmas. Lemma~\ref{lem:variance} first appeared in~\cite{fearn}. 

\begin{lem} \label{lem:variance}
    If $(Z_{n})_{n \in \mathbb{N}_0}$ is a BPVE with offspring sequence $\boldsymbol{\mu}$ such that for each $X_{n}$, $\E{X_n} < \infty$, we have 
    \begin{equation} \label{eq:variance}
    \Var{\frac{Z_{n}}{M_n}} = \sum_{j=0}^{n-1} \frac{\Var{X_j}}{M_{j}m_j^2} =
    \left(\sum_{j=0}^{n-2} \frac{\E{\widehat{X}_j - 1}}{M_{j+1}}\right) + \frac{\E{\widehat{X}_{n-1}}}{M_{n}} - 1. 
    \end{equation}    
\end{lem}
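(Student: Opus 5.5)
The plan is to prove the first equality by a one-step conditional variance recursion and the second by rewriting $\Var{X_j}$ through the size-biased variable $\widehat{X}_j$, followed by a telescoping sum. Throughout we work under the standing assumption~\eqref{eq:possibly-new-equation} that $0<m_j<\infty$. Since $\E{X_j}<\infty$, each $X_j$ is finite almost surely, so $Z_n<\infty$ almost surely and $\E{Z_n}=M_n$. We allow $\Var{X_j}=+\infty$ and interpret both sides of~\eqref{eq:variance} in $[0,\infty]$.

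\textbf{Step 1 (recursion).} Condition on $\mathscr{F}_n$, i.e.\ on $Z_n$. Given $Z_n$, the variable $Z_{n+1}=\sum_{j=1}^{Z_n}X_{n,j}$ is a sum of $Z_n$ i.i.d.\ copies of $X_n$. Hence
\[
\E{Z_{n+1}\mid Z_n}=m_nZ_n, \qquad \Var{Z_{n+1}\mid Z_n}=Z_n\Var{X_n}.
\]
The law of total variance then gives
\[
\Var{Z_{n+1}}=M_n\Var{X_n}+m_n^2\Var{Z_n}.
\]
If some second moment is infinite, we argue with $\E{Z_{n+1}^2}$ directly in $[0,\infty]$, and both sides are infinite.

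Dividing by $M_{n+1}^2=M_n^2m_n^2$ yields
\[
\Var{Z_{n+1}/M_{n+1}}=\Var{Z_n/M_n}+\frac{\Var{X_n}}{M_nm_n^2}.
\]
Since $\Var{Z_0}=0$, induction on $n$ gives the first equality in~\eqref{eq:variance}.

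\textbf{Step 2 (size-biased rewriting).} By the definition of the size-biased law, $\E{X_j^2}=m_j\E{\widehat{X}_j}$. Therefore $\Var{X_j}=m_j\E{\widehat{X}_j}-m_j^2$, and
\[
\frac{\Var{X_j}}{M_jm_j^2}=\frac{\E{\widehat{X}_j}-m_j}{M_{j+1}}=\frac{\E{\widehat{X}_j}-1}{M_{j+1}}+\left(\frac{1}{M_{j+1}}-\frac{1}{M_j}\right),
\]
where we used $m_j/M_{j+1}=1/M_j$.

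\textbf{Step 3 (telescoping).} Summing the identity of Step 2 over $j=0,\dots,n-1$, the bracketed terms telescope to $1/M_n-1$, since $M_0=1$. Finally, we combine this $1/M_n$ with the $j=n-1$ term $(\E{\widehat{X}_{n-1}}-1)/M_n$ to obtain $\E{\widehat{X}_{n-1}}/M_n$. This is exactly the right-hand side of~\eqref{eq:variance}.

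There is no real obstacle here. The only point needing care is the bookkeeping of infinite second moments, which is why Step 1 is phrased in $[0,\infty]$; then every term on each side is nonnegative or finite as appropriate.
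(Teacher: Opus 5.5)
Your proposal is correct and follows essentially the same route as the paper. The law of total variance (the paper's random-sum variance formula) gives the one-step recursion, iterating from $Z_0=1$ gives the first equality, and rewriting $\mathbb{E}[X_j^2]=m_j\mathbb{E}[\widehat{X}_j]$ and then reindexing (equivalently telescoping) the $1/M_j$ terms gives the second; your explicit handling of infinite second moments in $[0,\infty]$ is a harmless refinement that the paper leaves implicit.
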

\begin{proof}
Note that for a random number $N$ of independent random variables, by the law of total variance we have 
\begin{equation} \label{eq:variance-sum-formula}
    \Var{\sum_{j=1}^{N} Y_{i}} = \Var{Y_1}\E{N} + \Var{N} \E{Y_{1}}^2.
\end{equation}
By~\eqref{eq:variance-sum-formula}, we have
\[
\Var{\frac{Z_{n}}{M_n}} = \frac{M_{n-1}\Var{X_{n-1}} + \Var{Z_{n-1}} m_{n-1}^2}{M_n^2} = \frac{\Var{X_{n-1}}}{M_{n-1} m_{n-1}^2} + \Var{\frac{Z_{n-1}}{M_{n-1}}},  
\]
and thus the first equality in~\eqref{eq:variance} follows from iteration and the fact $Z_0 = 1$. Re-writing the first equality in~\eqref{eq:variance}, we have 
\begin{linenomath}
    \begin{align*}
        \Var{\frac{Z_{n}}{M_n}} = \sum_{j=0}^{n-1} \frac{\E{X_{j}^2} - m_j^2}{M_{j}m_j^2} = \sum_{j=0}^{n-1} \frac{\E{\widehat{X}_j}}{M_{j+1}} - \sum_{j=0}^{n-1} \frac{1}{M_j} = \left(\sum_{j=0}^{n-2} \frac{\E{\widehat{X}_{j} - 1}}{M_{j+1}}\right) + \frac{\E{\widehat{X}_{n-1}}}{M_{n}} - 1. 
    \end{align*}
\end{linenomath}
\end{proof}

\begin{lem} \label{lem:summands-diverge}
    Suppose that $(U_{n,i})_{n\in \mathbb{N}_0, i < n}$ is an array of non-negative, independent random variables and let $\boldsymbol{u} = (u_{i})_{i \in \mathbb{N}_0} \in (0, \infty)^{\mathbb{N}_0}$ be a sequence with $\sum_{i=0}^{\infty} u_i = \infty$. Suppose that for some absolute constant $C > 1$, we have $u_{i} \leq C$ and for all $n \in \mathbb{N}$ we have
    \begin{equation} \label{eq:variance-mean-bound}
        \E{U_{n,i}} = u_i \quad \text{ and } \quad \Var{U_{n,i}} \leq Cu_{i}^2 +  \left(C + \sum_{j=i+1}^{n-2} u_{j}\right)u_{i}. 
    \end{equation}
    Then, for any $r \in \mathbb{N}_0, M > C$, $\eps > 0$ there exists $n =  n(\eps, M, r) > r \in \mathbb{N}$ such that 
    \begin{equation} \label{eq:large-prob-to-be-large}
    \Prob{\sum_{i=r}^{n-1} U_{n,i} \leq M/2} < \eps. 
    \end{equation}
\end{lem}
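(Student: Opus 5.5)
Write $S_n := \sum_{i=r}^{n-1} U_{n,i}$ and $A_n := \sum_{i=r}^{n-1} u_i$. The plan is to apply Chebyshev's inequality to $S_n$. Since $\sum_i u_i = \infty$, we have $A_n = \E{S_n} \to \infty$ as $n \to \infty$. Once $A_n \geq M$ we get
\[
\Prob{S_n \leq M/2} \leq \Prob{|S_n - A_n| \geq A_n/2} \leq \frac{4\Var{S_n}}{A_n^2}.
\]
So the task reduces to showing that $\Var{S_n}/A_n^2$ is eventually smaller than $\eps$. Because we only need existence of a single $n$, a $\liminf$ statement along a subsequence is enough.

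By independence, $\Var{S_n} = \sum_{i=r}^{n-1}\Var{U_{n,i}}$. Using~\eqref{eq:variance-mean-bound} and $u_i \leq C$, this is at most
\[
C^2 A_n + C A_n + \sum_{i=r}^{n-1} u_i \sum_{j=i+1}^{n-2} u_j.
\]
The double sum is bounded by $\tfrac12 A_{n}^2$, since $\sum_{r\le i<j\le n-1} u_iu_j \leq \tfrac12\bigl(\sum u_i\bigr)^2$. This already shows that the linear terms are negligible compared with $A_n^2$. The difficulty is that the quadratic term gives only $\Var{S_n}/A_n^2 \lesssim 1/2$. That is not small, so plain Chebyshev with target $A_n/2$ fails. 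This is the main obstacle.

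To overcome it, I would use a one-sided second-moment bound with a target much smaller than the mean. We need $S_n > M/2$, not $S_n \geq A_n/2$. Paley--Zygmund gives
\[
\Prob{S_n > M/2} \geq \left(1 - \frac{M}{2A_n}\right)^2 \frac{A_n^2}{\E{S_n^2}},
\]
and $\E{S_n^2} = A_n^2 + \Var{S_n} \leq \tfrac32 A_n^2 + O(A_n)$. This yields only a probability bounded below by roughly $2/3$, which is still not arbitrarily close to $1$.

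To boost the probability, I would split the index range into many disjoint blocks. Choose $r = r_0 < r_1 < \dots < r_k$ so that each block $[r_{\ell-1}, r_\ell)$ has $u$-mass $B_\ell := \sum_{i=r_{\ell-1}}^{r_\ell-1} u_i$ very large. Block $\ell$'s contribution $\sum_{i\in \text{block}} U_{n,i}$ involves disjoint sets of independent variables, so the block sums are independent. However, the variance bound for a block involves $\sum_{j=i+1}^{n-2} u_j$, which includes the mass of all later blocks. I would therefore make the blocks grow \emph{backwards}: choose them so that each earlier block has mass dominating the total mass of everything after it. Then the cross term $\sum_{i\in\text{block}\,\ell} u_i \sum_{j>i} u_j$ is at most $\tfrac12 B_\ell^2 + B_\ell\sum_{m>\ell}B_m \approx (\tfrac12+o(1))B_\ell^2$.

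The construction goes as follows. Fix $k$ with $(1/3)^k < \eps$. Pick $n$ large, and pick the cut points from the top down: the last block near $n$ has mass large relative to $C^2$ and $M$, and each preceding block has mass much larger than the sum of all later ones. This is possible since $\sum u_i = \infty$; since the blocks are taken from the top down, I may need to choose $n$ large enough that $A_n$ exceeds the required total. With this choice, Paley--Zygmund gives each block sum exceeding $M/2$ with probability at least $1/2$, say. By independence, all $k$ blocks fail with probability at most $2^{-k} < \eps$, and since $U_{n,i} \geq 0$ we get $S_n > M/2$ whenever some block succeeds. The delicate step is making the cross-block variance contributions $B_\ell \sum_{m>\ell} B_m$ uniformly small relative to $B_\ell^2$. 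I would handle this by the rapid geometric growth of the block masses going backwards, for instance $B_\ell \geq \ell$-dependent multiples of $\sum_{m>\ell} B_m$.
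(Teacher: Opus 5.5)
Your proposal is correct and is essentially the paper's proof. The paper also cuts the index range into $k$ consecutive blocks whose $u$-masses satisfy $s_j \geq M$ and $s_j \geq C + \sum_{\ell > j} s_\ell$, so earlier blocks dominate later ones. It bounds each block's variance by a constant times $s_j^2$, uses a one-sided second-moment inequality per block, and multiplies over the $k$ independent blocks.

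The differences are minor. The paper uses Cantelli's inequality where you use Paley--Zygmund. It only needs each block to dominate the later ones (the mass roughly halves from one block to the next) rather than rapid geometric growth, since any per-block failure probability bounded away from $1$ is enough. Also, you fix $k$ with $(1/3)^k < \eps$ but then conclude with $2^{-k} < \eps$; choose $k$ so that $2^{-k} < \eps$ to match your per-block bound of $1/2$.
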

As the proof of Lemma~\ref{lem:summands-diverge} is a bit more computationally involved, we first prove Lemma~\ref{lem:trunc} and delay the proof of Lemma~\ref{lem:summands-diverge} to the end of the section. 

\begin{proof}[Proof of Lemma~\ref{lem:trunc}]
    By Lemma~\ref{lem:change-of-measure}, it suffices to show that, if $(\widehat{Z}_{n})_{n \in \mathbb{N}_0}$ denotes the spine process associated with $(Z_{n})_{n \in \mathbb{N}_0}$ then we have
    \begin{equation} \label{eq:need-to-prove}
    \limsup_{n \to \infty} \frac{\widehat{Z}_n}{M_n} = \infty. 
    \end{equation}
    But then, note that by~\eqref{eq:spine-sum-decomp} we can write 
    \begin{equation} \label{eq:spine-decomp-two}
    \frac{\widehat{Z}_n}{M_n} = \frac{1}{M_n} + \sum_{i=0}^{n-1}\frac{\widehat{Z}_{n,i}}{M_n}.
    \end{equation}
    Moreover, by Lemma~\ref{lem:variance} and independence of the summands involved,
\begin{linenomath}
\begin{align} \label{eq:variance-non-spine-descendents}
\nonumber \Var{\frac{\widehat{Z}_{n, i}}{M_{n}}} & = \frac{1}{M_{i+1}^2} \Var{\frac{\sum_{\ell=1}^{N_{i}}Z^{(\ell)}_{n,i}}{\prod_{j=i+1}^{n-1} m_{j}}} \stackrel{\eqref{eq:variance-sum-formula}}{=} \frac{\E{N_{i}}}{M_{i+1}^2} \Var{\frac{\widehat{Z}^{(1)}_{n,i}}{\prod_{j=i+1}^{n-1} m_{j}}} + \frac{\Var{N_{i}}}{M_{i+1}^2} \times 1
\\ & \nonumber \stackrel{\eqref{eq:variance}}{=} \frac{\E{\widehat{X}_i - 1}}{M_{i+1}^2} \left(\left(\sum_{j=i+1}^{n-2} \frac{\E{\widehat{X}_{j} - 1}}{\prod_{\ell=i+1}^{j} m_{\ell}}\right) + \frac{\E{\widehat{X}_{n-1}}}{\prod_{\ell=i+1}^{n-1} m_{\ell}} - 1 \right) + \frac{\Var{\widehat{X}_i - 1}}{M^2_{i+1}} 
\\ & \hspace{1.05mm} = \frac{\E{\widehat{X}_i - 1}}{M_{i+1}} \left(\left(\sum_{j=i+1}^{n-2} \frac{\E{\widehat{X}_{j} - 1}}{M_{j+1}}\right) + \frac{\E{\widehat{X}_{n-1}}}{M_{n}} - 1 \right) + \frac{\Var{\widehat{X}_i - 1}}{M^2_{i+1}}
\end{align}
\end{linenomath}
Now, set
\[
u_i:=\frac{\E{\widehat X_i-1}}{M_{i+1}},
\qquad
U_{n,i}:=\frac{\widehat Z_{n,i}}{M_n}.
\]
The assumptions imply that $\widehat X_i/M_{i+1}\leq C$ almost surely, and hence $u_i\leq C$. Moreover, by~\eqref{eq:variance-non-spine-descendents}, and increasing the constant if necessary, the variables $U_{n,i}$ satisfy the variance bound in Lemma~\ref{lem:summands-diverge}. The terminal case $i=n-1$ is covered separately, since then $\widehat Z_{n,n-1}=N_{n-1}=\widehat X_{n-1}-1$, and the same bound follows from $\widehat X_{n-1}/M_n\leq C$.

Applying Lemma~\ref{lem:summands-diverge}, for every $r\in\mathbb{N}_0$, every $M>C$ and every $\varepsilon>0$, there exists $n>r$ such that
\[
\Prob{\sum_{i=r}^{n-1}U_{n,i}\leq M/2}<\eps,
\]
hence, using the almost sure bound $\frac{\widehat Z_n}{M_n}
\geq
\sum_{i=r}^{n-1}U_{n,i}$, we have, 
    \[
    \Prob{\limsup_{n \to \infty} \frac{\widehat{Z}_{n}}{M_n} \leq M/2} \stackrel{\eqref{eq:spine-decomp-two}}{\leq} \Prob{\limsup_{n \to \infty} \sum_{i=0}^{n-1} U_{n,i} \leq M/2}  \stackrel{\eqref{eq:large-prob-to-be-large}}{=} 0.
    \]
    Equation~\eqref{eq:need-to-prove} follows. 
\end{proof}
We need only prove Lemma~\ref{lem:summands-diverge}.
\begin{proof}[Proof of Lemma~\ref{lem:summands-diverge}]
It suffices to prove the result for $r=0$. Indeed, for fixed $r\in\mathbb{N}_0$, apply the case $r=0$ to the reindexed tail array $(U_{n,i+r})_{n>r,\,i<n-r}$ and the sequence $(u_{i+r})_{i\in \mathbb{N}_0}$. The assumptions are inherited from the original array, and $\sum_{i=r}^{\infty}u_i=\infty$.
We start with the following claim:
\begin{clm} \label{clm:div-sum-ineq}
    For all $M>C$, for any $k \in \mathbb{N}$, there exists an increasing sequence $(n_{i})_{i \in \{0, \ldots, k\}}$ of integers, with $n_{0} = -1$, such that, with
    \[
    s_{j} := 
    \sum_{\ell = n_{j-1}+1}^{n_{j}} u_{\ell}
    \]
    for all $j \in \left\{1, \ldots, k \right\}$ we have 
    \begin{equation} \label{eq:means-dom}
     M \leq s_{j} \quad \text{ and } \quad C + \sum_{\ell = j+1}^{k} s_{\ell} \leq s_{j}.
    \end{equation} 
\end{clm}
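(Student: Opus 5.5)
The construction runs backwards in $j$. The second inequality in~\eqref{eq:means-dom} makes $s_j$ depend on all later blocks, so I fix the block sums first and the indices afterwards. The only facts I use are $\sum_\ell u_\ell = \infty$ and $0 < u_\ell \leq C$.

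\textbf{Step 1: target values.} Define $t_k := M$ and, recursively downward,
\[
t_j := M + C + \sum_{\ell=j+1}^{k} (t_\ell + C), \qquad j = k-1, \ldots, 1.
\]
The extra $C$ attached to each later block absorbs overshoot.

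\textbf{Step 2: greedy cut points.} Starting from $n_0 = -1$, for each $j = 1, \ldots, k$ let $n_j$ be the smallest integer $n > n_{j-1}$ with $\sum_{\ell=n_{j-1}+1}^{n} u_\ell \geq t_j$. Such an $n$ exists because the tail sum $\sum_{\ell > n_{j-1}} u_\ell$ diverges. Since $u_{n_j} \leq C$ and $n_j$ is minimal, we get
\[
t_j \leq s_j < t_j + C.
\]
The sequence $(n_j)$ is strictly increasing by construction.

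\textbf{Step 3: the inequalities.} The first inequality holds because $s_j \geq t_j \geq M$. For the second,
\[
C + \sum_{\ell=j+1}^{k} s_\ell < C + \sum_{\ell=j+1}^{k} (t_\ell + C) = t_j - M \leq s_j .
\]
When $j = k$ the sum on the left is empty, and the condition reduces to $s_k \geq C$, which holds since $M > C$.

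\textbf{Main obstacle.} The only delicate point is choosing the targets so that the overshoot from discrete cut points, at most $C$ per block, cannot break the domination. Running the construction from $j = k$ backwards with the slack built into $t_j$ resolves this. No probabilistic input is needed.
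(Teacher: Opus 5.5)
Your proof is correct. It uses the same mechanism as the paper: greedy cut points, existence guaranteed by divergence of $\sum u_\ell$, and overshoot at most $C$ per block because $u_\ell \le C$. The difference is in how the thresholds are chosen.

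The paper's thresholds are adaptive. It first reduces to $M > 2C$ and takes a huge first block with $s_1 > 4^k M$. It then cuts each later block at $s_{j-1}/2 - 2C$, which gives $s_j \in (s_{j-1}/2 - 2C,\, s_{j-1}/2 - C]$. Each inequality in \eqref{eq:means-dom} then needs its own argument:
\begin{itemize}
\item $s_j > 4^{k-j}M$ follows by induction, which is why the first block must be exponentially large;
\item the domination follows from the geometric comparison $s_\ell + C \le 2^{-(\ell-j)} s_j$, summed over $\ell > j$.
\end{itemize}

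You instead fix all targets in advance by the backward recursion, which builds the required domination and a slack of $C$ per later block directly into $t_j$. Both inequalities then follow in one line from $t_j \le s_j < t_j + C$. You need no reduction on $M$ and no separate geometric-series estimate. Your targets still grow like $2^{k-j}$ (your $t_1$ is of order $2^k M$ rather than the paper's $4^k M$), so the block sizes have essentially the same geometric profile as in the paper. Since only \eqref{eq:means-dom} is used afterwards in the proof of Lemma~\ref{lem:summands-diverge}, your version loses nothing and is the simpler argument.

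One small wording point: your construction does not really "run backwards in $j$". The targets are defined backwards, but the cut points are still chosen forwards.
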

Given the truth of the claim, for $\eps > 0$, fix $k$ sufficiently large that $\left(\frac{4(C+2)}{4(C+2)+1}\right)^k \leq \eps$ and choose a sequence $(n_{i})_{i = 0, \ldots k}$ such that~\eqref{eq:means-dom} is satisfied. Then, if we set $N -1 = n_{k}$,
\[
S_{n, j} :=
\sum_{\ell=n_{j-1}+1}^{n_{j}} U_{n, \ell} 
\]
note that $\E{S_{n,j}} = s_{j}$ for $j \in \left\{ 1, \ldots, k\right\}$. Moreover, since each of the summands $U_{n, \ell}$ are independent, by~\eqref{eq:variance-mean-bound} and the fact $n -2 = n_{k}-1$
\begin{align*}
\Var{S_{n,j}} & \leq C\left(\sum_{i=n_{j-1}+1}^{n_{j}} u_{i}^2\right) + \sum_{i=n_{j-1}+1}^{n_{j}} \left(C + \sum_{\ell=i+1}^{n_{k}-1} u_{\ell}\right)u_{i}
\\ & \leq  C\left(\sum_{i=n_{j-1}+1}^{n_{j}} u_{i}^2\right) + \sum_{i=n_{j-1}+1}^{n_{j}} \left(C + \sum_{\ell = j+1}^{k} s_{\ell} + \sum_{\ell=i+1}^{n_{j}} u_{\ell}\right)u_{i}
\\ & \stackrel{\eqref{eq:means-dom}}{\leq} C\left(\sum_{i=n_{j-1}+1}^{n_{j}} u_{i}^2\right) + \sum_{i=n_{j-1}+1}^{n_{j}} \left(s_{j} + \sum_{\ell=i+1}^{n_{j}} u_{\ell}\right)u_{i} 
\\ & \leq C\left(\sum_{i=n_{j-1}+1}^{n_{j}} u_{i}^2\right) + 2s_{j}^2 
\leq (C+2) s_{j}^2. 
 \end{align*}
By  Cantelli's inequality, 
\[
\Prob{S_{n,j} \leq M/2} \stackrel{\eqref{eq:means-dom}}{\leq} \Prob{S_{n,j} \leq \frac{s_{j}}{2}} = \Prob{S_{n,j} - s_{j} \leq - \frac{s_{j}}{2}} \leq \frac{(C+2)s_{j}^2}{(C+2)s_{j}^2 + s_{j}^2/4} = \frac{4(C+2)}{4(C+2) + 1}. 
\]
Thus, by independence of the $S_{n,j}$ for different $j$, and non-negativity of the summands,
\begin{linenomath}
\begin{align*}
\Prob{\sum_{i=0}^{n-1} U_{n,i} \leq M/2} & = \Prob{\sum_{j=1}^{k} S_{n,j} \leq M/2} \\ & \leq \Prob{\forall j\in\left\{1, \ldots, k\right\} \; S_{n,j} \leq M/2} \leq \left(\frac{4(C+2)}{4(C+2)+1}\right)^{k} \leq \eps. 
\end{align*}
\end{linenomath}
\end{proof}
\begin{proof}[Proof of Claim~\ref{clm:div-sum-ineq}] 
Since the truth of the statement for $M$ large implies the statement for all smaller $M$, without loss of generality suppose $M > 2C$. Then,  given $k \in \mathbb{N}$, we construct the sequence $(n_{i})_{i =0, 1, \ldots k}$ iteratively. First, since $\sum_{\ell =1}^{\infty} u_{\ell} = \infty$, we may choose $n_1$ sufficiently large that 
\begin{equation} \label{eq:initial-sum}
s_{1} = \sum_{\ell=0}^{n_1} u_{\ell} > 4^{k} M. 
\end{equation}
Next, for $j \in \{2, \ldots, k\}$ we set 
\[
n_{j} := \inf_{\ell \in \mathbb{N}}\left\{\ell > n_{j-1} + 1 \colon \sum_{i=n_{j-1} + 1}^{\ell} u_{i} > \frac{s_{j-1}}{2} - 2C\right\}. 
\]
Note that, by construction, since each $u_{\ell} \in (0, C]$, we have 
\begin{equation} \label{eq:bound-on-sj}
s_{j} \in 
\bigg(\frac{s_{j-1}}{2} - 2C, \frac{s_{j-1}}{2} - C\bigg].
\end{equation}
Moreover, if $s_{j-1} > 4^{k-j+1}M > 4M$, since $M > 2C$ we have 
\[
s_{j} > \frac{s_{j-1}}{2} - 2C > \frac{s_{j-1}}{4} > 4^{k-j} M.
\]
Thus we have $s_{j} > M$ for all $j \in \left\{1, \ldots, k\right\}$, implying the first part of~\eqref{eq:means-dom}. Finally, for $j < k$, using the fact that by~\eqref{eq:bound-on-sj} for $\ell \in \left\{j+1, \ldots, k\right\}$
\[
s_{\ell} + C \leq \frac{s_{\ell - 1}}{2} < \frac{s_{\ell - 1} + C}{2} \leq \frac{s_{\ell - 2}}{4} \leq \cdots \leq \frac{s_{j}}{2^{\ell - j}}, 
\]
we have
\[
s_{j} > \left(1 - \frac{1}{2^{k-j}}\right)s_{j} = \sum_{\ell=j+1}^{k}\frac{s_{j}}{2^{\ell - j}} \geq \left(\sum_{\ell = j+1}^{k-1} s_{\ell} \right) + s_{k} + C = C+ \sum_{\ell=j+1}^{k} s_{\ell},
\]
as required for the second part of~\eqref{eq:means-dom}. 
\end{proof}

\section{Proof of Theorem~\ref{thm:rate-of-growth}}
\label{sec:growth-scale}
In the proof of Theorem~\ref{thm:rate-of-growth}, note that
Equation~\eqref{eq:rate-of-growth-mean-comparison} implies that there exists constants $s_1, s_2 > 0$ such that, for all $n \in \mathbb{N}_0$
\begin{equation} \label{eq:rate-of-growth-mean-comparison-2}
s_1 < \frac{C_{n}}{\tilde{M}^{(\gamma)}_{n}} < s_2. 
\end{equation} 
 \subsection{Proofs of sufficiency of conditions of Theorem~\ref{thm:rate-of-growth}}
 We now prove the sufficiency of conditions appearing in Theorem~\ref{thm:rate-of-growth}.

\begin{proof}[Proof of sufficiency of conditions of Theorem~\ref{thm:rate-of-growth}]
First, since~\eqref{eq:rate-of-growth-mean-comparison} is satisfied, there exists $s_1$ such that for all $n\in \mathbb{N}_0$ we have $C_{n}/\tilde{M}^{(\gamma)}_{n} \leq s_2$. It follows that for all, $n \in \mathbb{N}_0$ $\widehat{Y}^{(\gamma)}_{n} \leq s_{2} \gamma \tilde{M}^{(\gamma)}_{n+1}$ almost surely. Thus by Theorem~\ref{thm:unif-integrable} with $K = \gamma s_2$, $\tilde{Z}^{(\gamma)}_{n}/\tilde{M}^{(\gamma)}_{n}$ converges in $L^1$ to a non-degenerate, finite random variable $\tilde{W}$, thus, \[\Prob{0 < \lim_{n \to \infty} \tilde{Z}^{(\gamma)}_{n}/\tilde{M}^{(\gamma)}_{n} < \infty} > 0.\] Again, by~\eqref{eq:rate-of-growth-mean-comparison-2}, this implies that 
\begin{equation} \label{eq:rate-of-growth-for-truncated}
\Prob{0 < \liminf_{n \to \infty} \tilde{Z}^{(\gamma)}_{n}/C_n \leq \limsup_{n \to \infty} \tilde{Z}^{(\gamma)}_{n}/C_n <  \infty} > 0. 
\end{equation}
For the final part of the proof, we exploit the fact that, for a sequence of numbers $(\alpha_{n})_{n \in \mathbb{N}_0} \in (0,1)^{\mathbb{N}_0}$, we have 
\begin{equation} \label{eq:product-identity}
    \prod_{j=0}^{\infty} \alpha_{j} > 0 \quad \text{if and only if} \quad \sum_{j=0}^{\infty} (1 - \alpha_{j}) < \infty. 
\end{equation}
Now, suppose that, for some $a,b > 0$, $(\mathcal{A}_{n}(a,b))_{n \in \mathbb{N}_0}$,  denote the events 
\begin{equation} \label{eq:rate-at-step-n}
\mathcal{A}_{n}(a,b) := \left\{a C_n < \tilde{Z}^{(\gamma)}_{n}  < b C_{n} \right\}. 
\end{equation}
Then, by~\eqref{eq:rate-of-growth-for-truncated}, for some $a, b > 0$ we have $\Prob{\bigcap_{n=0}^{\infty} \mathcal{A}_{n}(a,b)} > 0$. Now, we construct $(Z_{n})_{n \in \mathbb{N}_0}$ by first sampling $(\tilde{Z}^{(\gamma)}_{n})_{n \in \mathbb{N}_0}$ and then, using auxiliary independent random variables, potentially adding extra individuals in every generation to form $(Z_{n})_{n \in \mathbb{N}_0}$. Let 
\begin{equation} \label{eq:rate-at-step-n-1}
\mathcal{B}_{n} := \left\{Z_{n} = \tilde{Z}^{(\gamma)}_{n}\right\}. 
\end{equation}
Then note that disagreement occurs if at least one individual in $\tilde{Z}^{(\gamma)}_{n}$ has an extra child in $Z_{n+1}$, hence 
\begin{linenomath*}
\begin{align*}
& \Prob{\mathcal{B}^{c}_{n+1} \, \bigg | \, (\tilde{Z}^{(\gamma)}_{n})_{n \in \mathbb{N}_0}, \bigcap_{j=0}^{n}\mathcal{B}_j} \mathbf{1}_{\left\{\bigcap_{\ell=0}^{\infty} \mathcal{A}_{\ell}(a,b) \right\}} \\ & \hspace{1cm} \leq \Prob{\exists i \in \left\{1, \ldots, \lfloor b C_n \rfloor\right\}\colon \; X_{n,i}\mathbf{1}_{\left\{X_{n,i} > \gamma C_{n+1}\right\}} > 0} \leq bC_{n} \Prob{X_{n} > \gamma C_{n+1}}, 
\end{align*}
\end{linenomath*}
thus by the chain rule, summability of the right-hand side in $n$, and~\eqref{eq:product-identity}, we have 
\begin{linenomath*}
\begin{align*}
& \Prob{\bigcap_{j=0}^{\infty} \mathcal{B}_{j} \, \bigg | \, (\tilde{Z}^{(\gamma)}_{n})_{n \in \mathbb{N}_0}}\mathbf{1}_{\left\{\bigcap_{\ell=0}^{\infty} \mathcal{A}_{\ell}(a,b) \right\}} = \prod_{n=0}^{\infty} \left(1 - \Prob{\mathcal{B}^{c}_{n} \, \bigg | \, (\tilde{Z}^{(\gamma)}_{n})_{n \in \mathbb{N}_0}, \bigcap_{j=0}^{n-1}\mathcal{B}_j} \right) \mathbf{1}_{\left\{\bigcap_{\ell=0}^{\infty} \mathcal{A}_{\ell}(a,b) \right\}} > 0,  
\end{align*}
\end{linenomath*}
with positive probability. Thus, taking expectations over both sides, we have 
\[
\Prob{\forall n \in \mathbb{N}_{0}\colon Z_{n} = \tilde{Z}^{(\gamma)}_{n} \, \text{ and  } \, a C_n < \tilde{Z}^{(\gamma)}_{n}  < b C_{n}} = \Prob{\bigcap_{n=0}^{\infty} \left(\mathcal{A}_{n}(a,b) \cap \mathcal{B}_{n}\right)} > 0.
\]

Finally, when the limit exists in~\eqref{eq:rate-of-growth-mean-comparison}, the same argument applies to show that $(C_n)_{n \in \mathbb{N}_0}$ is a rate of growth for $(Z_{n})_{n \in \mathbb{N}_0}$.  
\end{proof}
 \subsection{Proofs of necessity of conditions of Theorem~\ref{thm:rate-of-growth}}
To prove necessity of the conditions, we first need the following proposition, whose proof we delay to Section~\ref{sec:annoying-part}. 
\begin{prop} \label{prop:annoying-part}
Let $(Z_n)_{n\geq 0}$ be a BPVE, with a growth scale $(C_{n})_{n \in \mathbb{N}_0}$. Then, for every $\gamma>0$, 
\begin{equation} \label{eq:most-annoying-part}
\liminf_{m \to \infty}
\prod_{n=0}^{m}
\frac{C_{n+1}}
{C_n\,
\mathbb E\left[
X_n\mathbf 1_{\{X_n\leq \gamma C_{n+1}\}}
\right]}
>0.
\end{equation}
\end{prop}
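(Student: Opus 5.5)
The plan is to build a Heyde-type martingale from Laplace transforms, avoiding inverse generating functions by a diagonal limit. Then I would compare the resulting norming constants with the truncated means $\tilde{M}^{(\gamma)}_n$.

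Note first that the product in~\eqref{eq:most-annoying-part} telescopes to $C_{m+1}/(C_0\tilde{M}^{(\gamma)}_{m+1})$. So the claim is $\tilde{M}^{(\gamma)}_N\leq c\,C_N$ for all $N$ and some $c<\infty$. Fix $a,b>0$ and $n_0$ with $\Prob{G}>0$, where $G:=\{aC_n<Z_n<bC_n\ \forall n\geq n_0\}$; such constants exist because $(C_n)$ is a growth scale. Write $f_n(s):=\E{e^{-sX_n}}$, with the convention $e^{-s\infty}=0$ in the defective case.

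\emph{Step 1 (finite-horizon martingales).} For $t>0$ and each $N$, define $s^{(N)}_N:=t/C_N$ and, backwards, $s^{(N)}_n:=-\log f_n(s^{(N)}_{n+1})$ for $n<N$. Then $\left(e^{-s^{(N)}_nZ_n}\right)_{n\leq N}$ is a martingale with values in $[0,1]$, since $\E{e^{-sZ_{n+1}}\mid Z_n}=f_n(s)^{Z_n}$.

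\emph{Step 2 (uniform control of $s^{(N)}_nC_n$).} I would show that for fixed $n$ the numbers $s^{(N)}_nC_n$ stay in a compact subset of $(0,\infty)$ as $N$ varies.
\begin{itemize}
\item Upper bound: we have $e^{-s^{(N)}_nZ_n}=\E{e^{-tZ_N/C_N}\mid\mathscr F_n}$. On $G$ with $N\geq n_0$, the integrand is at least $e^{-tb}$. Hence $\E{e^{-s^{(N)}_nZ_n}}\geq e^{-tb}\Prob{G}$, and since $Z_n$ is a fixed random variable this prevents $s^{(N)}_n\to\infty$.
\item Lower bound: the same identity with the bound $e^{-ta}$ on $G$ gives $\E{e^{-s^{(N)}_nZ_n}\mathbf 1_G}\leq e^{-ta}\Prob{G}+\Prob{G^c\cap\{\cdot\}}$. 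Choosing $t$ large (so that $e^{-ta}$ is small) keeps $s^{(N)}_n$ away from $0$.
\end{itemize}

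\emph{Step 3 (diagonalisation and the limit martingale).} By a diagonal argument, extract $N_k\to\infty$ with $s^{(N_k)}_n\to s_n\in(0,\infty)$ for every $n$. By continuity of $f_n$, $s_n=-\log f_n(s_{n+1})$, and by dominated convergence $V_n:=e^{-s_nZ_n}$ is a bounded martingale, converging a.s. and in $L^1$ to some $V_\infty$. Passing the Step 2 estimates to the limit, and using that on $G$ the limit $V_\infty$ is an average of values in $[e^{-tb},e^{-ta}]$, I expect to get $c_1\leq s_nC_n\leq c_2$ for all large $n$. Otherwise, along a subsequence $V_n\to0$ or $V_n\to1$ on $G$, which contradicts $\E{V_\infty\mathbf 1_G}$ lying strictly between the corresponding bounds.

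\emph{Step 4 (comparison with truncated means), the main obstacle.} From $1-f_n(s)\geq\E{(1-e^{-sX_n})\mathbf 1_{X_n\leq\gamma C_{n+1}}}$ and $1-e^{-x}\geq x(1-x/2)$, we get
\[
s_n\geq 1-f_n(s_{n+1})\geq s_{n+1}\E{Y^{(\gamma)}_n}\bigl(1-\tfrac{\gamma}{2}s_{n+1}C_{n+1}\bigr).
\]
Iterating gives $s_0\gtrsim s_N\tilde M^{(\gamma)}_N\prod_n(1-\epsilon_n)$, and with $s_NC_N\geq c_1$ this yields the result provided the losses $\prod_n(1-\epsilon_n)$ stay bounded away from $0$. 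This is where I expect difficulty: $s_{n+1}C_{n+1}$ is only bounded, not summable, so the crude per-step loss is not summable.

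My first attempt would be to choose $t$ small, which makes $s_nC_n$ uniformly small by Step 2. I would then use the exact identity $s_n=-\log f_n(s_{n+1})$ rather than a one-sided bound, pairing the second-order term with the first-order contribution of the discarded part $\E{X_n\mathbf 1_{X_n>\gamma C_{n+1}}}$. The aim is that the net per-step factor is at least $\E{Y^{(\gamma)}_n}$ up to errors whose product converges.

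If that fails, I would fall back on the martingale property directly. The quantity $s_n\E{\tilde Z^{(\gamma)}_n\mid\cdot}$ is a supermartingale-type bound for $-\log V_n$, and the a.s. convergence of $V_n$ on $G$ prevents $s_n\tilde M^{(\gamma)}_n$ from tending to infinity along a subsequence.
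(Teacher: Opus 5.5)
\paragraph{Steps 1--3 match the paper; Step 4 is a genuine gap.} Your Steps 1--3 are essentially the paper's construction. Writing $x_n=e^{-s_n}$, your backward recursion is $x_n=F_{n,N}(e^{-t/C_N})$, and the diagonal extraction gives the Heyde-type martingale $x_n^{Z_n}$. L\'evy's zero-one law on $G$ then gives $s_nC_n\asymp 1$ for large $n$. (One small fix in Step 2: intersect with $G$, on which $Z_n\geq 1$, since $\mathbb{P}(Z_n=0)$ may exceed $e^{-tb}\mathbb{P}(G)$. Also, no largeness of $t$ is needed there.)

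However, Step 4, which you flag as the main obstacle, is the actual content of the proposition, and none of your remedies closes it.
\begin{itemize}
\item Taking $t$ small makes $\epsilon_n=\frac{\gamma}{2}s_{n+1}C_{n+1}$ small, but still bounded below by a positive constant. So $\prod_n(1-\epsilon_n)$ still decays geometrically.
\item Using the exact identity, the second-order error per step is of order $\mathbb{E}[X_n(X_n-1)\mathbf 1_{X_n\leq\gamma C_{n+1}}]/(C_{n+1}\mathbb{E}[Y^{(\gamma)}_n])=\mathbb{E}[\widehat Y^{(\gamma)}_n-1]/C_{n+1}$. Summing this is, given $C_n\asymp\tilde M^{(\gamma)}_n$, exactly condition~\eqref{eq:truncation-converges}. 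In the paper that condition is obtained only after, and by means of, this proposition, so assuming it is circular. You also give no mechanism by which the discarded mass would cancel this error term by term.
\item The fallback fails too. Jensen's inequality for $e^{-s_nZ_n}$ can only bound $s_n/s_{n+1}$ from above (e.g.\ $s_n\leq m_ns_{n+1}$), whereas you need a lower bound in terms of $\mathbb{E}[Y^{(\gamma)}_n]$. Moreover, a degenerate limit $\tilde Z^{(\gamma)}_n/\tilde M^{(\gamma)}_n\to 0$ is perfectly compatible with $Z_n\asymp C_n$ on $G$. So almost sure convergence of $V_n$ yields no contradiction when $s_n\tilde M^{(\gamma)}_n\to\infty$.
\end{itemize}

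\paragraph{The missing idea: a second Heyde martingale.} Run the same diagonal construction from $2t/C_N$, along a common subsequence, to get $y_n$ with $y_n^{Z_n}$ a bounded martingale. On $G$, using $\min_{u\in[a,b]}(e^{-tu}-e^{-2tu})>0$, show that $x_n-y_n\asymp 1/C_n$ as well as $1-x_n\asymp 1/C_n$.

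Now set $U_n=(1-x_n)/(1-x_{n+1})$ and $V_n=(x_n-y_n)/(x_{n+1}-y_{n+1})$. Convexity gives $V_n\leq U_n$, while $\prod_n V_n/U_n$ telescopes to a quantity bounded below. Hence $\sum_n(1-V_n/U_n)<\infty$.

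Next note $U_n=\mathbb{E}[A_{X_n}]$ and $V_n=\mathbb{E}[B_{X_n}]$, where $A_k=(1-x^k)/(1-x)$ and $B_k=(x^k-y^k)/(x-y)$ with $x=x_{n+1}$, $y=y_{n+1}$. The elementary bound $|A_k-k\mathbf 1_{k\leq(1-x)^{-1}}|\leq 3(A_k-B_k)$ converts the previous summability into absolute summability of $1-\mathbb{E}[X_n\mathbf 1_{X_n\leq R_n}]/U_n$. Here $R_n=(1-x_{n+1})^{-1}\geq\gamma C_{n+1}$ once $t$ is small.

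Finally, $\prod_n U_n$ telescopes to $(1-x_{N_1})/(1-x_{m+1})\asymp C_{m+1}$, and the bound $\tilde M^{(\gamma)}_{m+1}\lesssim C_{m+1}$ follows. In short, the second-order errors you could not control are dominated by $U_n-V_n$. Their summability comes for free from both normalisations being of order $1/C_n$, and nothing in your Steps 1--3 provides such control.
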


\begin{proof}[Proof of necessity of conditions of Theorem~\ref{thm:rate-of-growth}] If $(C_{n})_{n \in \mathbb{N}}$ were a growth scale of $(Z_{n})_{n \in \mathbb{N}}$ then, for some $a, \gamma > 0$
\[
\Prob{\forall n \in \mathbb{N}_0\colon a C_{n} < Z_{n} < \gamma C_{n}} > 0,
\]
and since the occurrence of this event implies that $Z_n$ agrees with $\tilde{Z}^{(\gamma)}_{n}$, for some $a > 0$ 
\begin{equation} \label{eq:gamma-trunc-for-converse}
\Prob{\forall n \in \mathbb{N}_0 \colon Z_{n} = \tilde{Z}^{(\gamma)}_{n}, a C_{n} < Z^{(\gamma)}_{n} < \gamma C_{n}} > 0.
\end{equation}
Suppose that Equation~\eqref{eq:truncation-determines-original} is not satisfied for this choice of $\gamma$. Using the notation from~\eqref{eq:rate-at-step-n} and~\eqref{eq:rate-at-step-n-1}, we have 
\[
\Prob{\mathcal{A}_{n+1}(a,\gamma) \cap  \mathcal{B}_{n+1} \, \bigg | \, \bigcap_{j=0}^{n} (\mathcal{A}_{j}(a,\gamma) \cap \mathcal{B}_{j})} \leq (1-\Prob{X_{n} > \gamma C_{n+1}})^{a C_n},
\]
and thus, by the by the chain rule, and the bound $1-x \leq e^{-x}$ we have
\begin{linenomath}
    \begin{align*}
        \Prob{\bigcap_{n=0}^{\infty} \left(A_{n}(a,b) \cap \mathcal{B}_{n} \right)} & = \prod_{n=0}^{\infty} \Prob{\mathcal{A}_{n+1}(a,b) \cap \mathcal{B}_{n+1} \, \bigg | \, \bigcap_{j=0}^{n} \left( \mathcal{A}_{j}(a,b) \cap \mathcal{B}_{j} \right)} \\ & \leq e^{-a \sum_{n=0}^{\infty} C_{n} \Prob{X_{n} > \gamma C_{n+1}}} = 0,
    \end{align*}
\end{linenomath}
contradicting~\eqref{eq:gamma-trunc-for-converse}. This implies~\eqref{eq:truncation-determines-original}. 
Next, Equation~\eqref{eq:gamma-trunc-for-converse} implies that $(C_{n})_{n \in \mathbb{N}_0}$ is a growth scale of $(\tilde{Z}^{(\gamma)}_{n})_{n \in \mathbb{N}_0}$. Thus, the fact that the left-side of~\eqref{eq:rate-of-growth-mean-comparison} is positive follows from Proposition~\ref{prop:annoying-part}. Moreover, since the non-negative martingale $(\tilde{Z}^{(\gamma)}_{n}/\tilde{M}^{(\gamma)}_{n})_{n \in \mathbb{N}_{0}}$ has an almost surely finite limit $\tilde{W}^{(\gamma)}$, we have, 
\begin{equation} \label{eq:liminf-blah}
\liminf_{n \to \infty} \frac{\tilde{Z}^{(\gamma)}_{n}}{C_{n}} = \frac{\tilde{W}^{(\gamma)}}{\limsup_{n \to \infty} \frac{C_{n}}{\tilde{M}^{(\gamma)}_{n}}},
\end{equation}
from which, in order for $(C_{n})_{n \in \mathbb{N}_0}$ to be a growth-scale, we require the right-side of~\eqref{eq:rate-of-growth-mean-comparison} to be finite. 
Finally, since~\eqref{eq:rate-of-growth-mean-comparison} is satisfied, $(C_{n})_{n \in \mathbb{N}_0}$ is a growth-scale of $(\tilde{Z}^{(\gamma)}_{n})_{n \in \mathbb{N}_0}$ if and only if $(\tilde{M}^{(\gamma)}_{n})_{n \in \mathbb{N}_0}$ is a rate of growth of $(\tilde{Z}^{(\gamma)}_{n})_{n \in \mathbb{N}_0}$. But now, again, if $s_2 := \sup_{n \in \mathbb{N}_0} C_{n}/\tilde{M}^{(\gamma)}_{n}$, by applying Theorem~\ref{thm:unif-integrable} with $K = \gamma s_2$, this is true if and only if~\eqref{eq:truncation-converges} is satisfied. 

Finally the same logic carries through to show that if $(C_{n})_{n \in \mathbb{N}_0}$ is a rate of growth of $(Z_{n})_{n \in \mathbb{N}_0}$, it is also a rate of growth of $(Z^{(\gamma)}_{n})_{n \in \mathbb{N}_0}$. But then, by~\eqref{eq:rate-of-growth-mean-comparison} it must be the case that 
\[
\left\{0 < \lim_{n \to \infty} \frac{Z^{(\gamma)}_{n}}{C_{n}} < \infty\right\} \subseteq \left\{0 < \lim_{n \to \infty} \frac{Z^{(\gamma)}_{n}}{\tilde{M}^{(\gamma)}_{n}} < \infty\right\}, 
\]
hence, the limit in~\eqref{eq:rate-of-growth-mean-comparison} must exist. 
\end{proof}

\subsection{Proof of Proposition~\ref{prop:annoying-part}} \label{sec:annoying-part}
\begin{proof}[Proof of Proposition~\ref{prop:annoying-part}]
Let $\gamma > 0$ be given. Note that, since $(C_{n})_{n \in \mathbb{N}_0}$ is a growth scale, there exists $a, b \in (0,\infty)$ such that the event
\begin{equation} \label{eq:def-A}
A:=
\left\{
aC_n\leq Z_n\leq bC_n \: \forall n \in \mathbb{N}_0 \right\}
\end{equation}
has positive probability. Now, we set 
\[
f_n(s):=\mathbb E[s^{X_n}], \quad \text{and, for $m > n$, write} \quad F_{n,m}:=f_n\circ f_{n+1}\circ\cdots\circ f_{m-1}.
\]
(We follow the convention that $f_{n}(1) := \lim_{s \uparrow 1} f_{n}(s)$.)
For a given $t > 0$, for each $n$, the sequences $(F_{n,m}(e^{-t/C_{m}})$ and $(F_{n,m}(e^{-2t/C_{m}})$ take values in $[0,1]$. Thus, by a diagonalisation argument, there exists a subsequence $(m_{j})_{j \in \mathbb{N}}$ such that, for every $n \in \mathbb{N}_0$, the limits
\[
x_{n} := \lim_{j \to \infty} F_{n,m_j}\left(e^{-t/C_{m_j}}\right) \quad \text{and} \quad y_n := \lim_{j \to \infty} F_{n,m_j}\left(e^{-2t/C_{m_j}}\right)
\]
exist in $[0,1]$. Moreover, by continuity of $f_n$, $x_n=f_n(x_{n+1})$ and $y_n=f_n(y_{n+1})$. Thus, if $(\mathcal{F}_{n})_{n \in \mathbb{N}_0}$ denotes the natural filtration associated with $(Z_{n})_{n\in \mathbb{N}_0}$ 
\[
\E{x_{n+1}^{Z_{n+1}}\mid\mathcal F_n}= \E{x_{n+1}^{\sum_{j=1}^{Z_{n}} X_{n,j}}\mid Z_n} = 
\prod_{i=1}^{Z_n}\mathbb E[x_{n+1}^{X_{n}}] =
f_n(x_{n+1})^{Z_n} =
x_n^{Z_n},
\]
and similarly for $(y_n^{Z_{n}})_{n \in \mathbb{N}_0}$. Therefore, $(x_n^{Z_{n}})_{n \in \mathbb{N}_0}$ and $(y_n^{Z_{n}})_{n \in \mathbb{N}_0}$ are bounded martingales.

We now have the following two claims, whose proof we defer to the end of the section:
\begin{clm} \label{clm:x-n-y-n}
    There exists $d_1, d_2, d_3, t > 0$, with $d_1 < \gamma^{-1}$ such that, with $(x_{n})_{n \in \mathbb{N}_0}$ and $(y_{n})_{n \in \mathbb{N}_0}$ as defined above, there exists $N_1 \in \mathbb{N}$ such that for all $n \geq N_1$
    \begin{equation} \label{eq:seneta-rate}
        \frac{d_1}{C_{n}} \leq 1 - x_n \leq \frac{\gamma^{-1}}{C_n} \quad \text{and} \quad \frac{d_2}{C_{n}} \leq x_n - y_n \leq \frac{d_3}{C_n}. 
    \end{equation}
    Moreover, with 
    \begin{equation} \label{eq:u-n-def}
    U_n := \frac{1-x_n}{1-x_{n+1}} = \frac{1 - f_{n}(x_{n+1})}{1 - x_{n+1}}\quad \text{and} \quad V_{n} := \frac{x_{n} - y_{n}}{x_{n+1} - y_{n+1}} = \frac{f_{n}(x_{n+1}) - f_{n}(y_{n+1})}{x_{n+1} - y_{n+1}}
    \end{equation} we have 
    \begin{equation} \label{eq:v-over-u-product-conv}
        \sum_{n=N_{1}}^{\infty} \left(1 - \frac{V_{n}}{U_{n}} \right) < \infty.  
    \end{equation}
\end{clm}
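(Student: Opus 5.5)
We prove the two-sided bounds in~\eqref{eq:seneta-rate} first and deduce~\eqref{eq:v-over-u-product-conv} from them. The basic tool is the pair of bounded martingales $x_n^{Z_n}$ and $y_n^{Z_n}$, together with the event $A$ from~\eqref{eq:def-A}.

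\textbf{Identifying the limits.} Since $x_n^{Z_n}$ is a bounded martingale, we have $x_n = \E{x_m^{Z_m}\mid Z_n=1}$. Moreover $x_m$ is the limit along $(m_j)$ of $F_{m,m_j}(e^{-t/C_{m_j}})$, so
\[
x_n = \lim_{j} \E{e^{-tZ_{m_j}/C_{m_j}}\mid Z_n = 1}.
\]
Hence $x_n^{Z_n} = \lim_j \E{e^{-tZ_{m_j}/C_{m_j}}\mid \mathcal{F}_n}$, and similarly $y_n^{Z_n} = \lim_j \E{e^{-2tZ_{m_j}/C_{m_j}}\mid\mathcal{F}_n}$.

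\textbf{Lower bound on $1-x_n$.} On $A$ we have $Z_{m_j}/C_{m_j}\ge a$. Since $1-e^{-tu}$ is increasing in $u$, this gives
\[
1-x_n^{Z_n}\ge \E{\left(1-e^{-ta}\right)\mathbf 1_A\mid\mathcal F_n}.
\]
By L\'evy's upward theorem, $\Prob{A\mid\mathcal F_n}\to \mathbf 1_A$. So on $A$, for large $n$,
\[
1-x_n^{Z_n}\ge \tfrac12\left(1-e^{-ta}\right)=:c.
\]
Since $Z_n\le bC_n$ on $A$, we get $1-x_n^{bC_n}\ge c$. This forces $1-x_n\ge d_1/C_n$ with $d_1 \approx -\log(1-c)/b$.

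\textbf{Upper bound on $1-x_n$.} Jensen gives $x_n^{Z_n}\ge \liminf_j \exp\bigl(-t\,\E{Z_{m_j}/C_{m_j}\mid\mathcal F_n}\bigr)$. This is not directly controlled, so instead I would again use $A$ and the martingale property. On $A$, for $n$ large, $x_n^{Z_n}$ is close to $\E{\lim x_m^{Z_m}\mid\mathcal F_n}$, and on $A$ we have $x_m^{Z_m}\ge x_m^{bC_m}$. Choosing $t$ small should make $\lim_j \E{e^{-tZ_{m_j}/C_{m_j}}\mathbf 1_A\mid\mathcal F_n}\ge e^{-tb}$ near $1$. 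This yields $x_n^{aC_n}\ge x_n^{Z_n}\ge 1-\eta$, hence $1-x_n\le 2\eta/(aC_n)$. Taking $t$ (and thus $\eta$) small gives the bound $\gamma^{-1}/C_n$.

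The difficulty is the off-$A$ part, where $Z_m$ could be small and push $x$ down. I expect to handle it with a conditional zero--one argument: on $A$, $\Prob{A\mid\mathcal F_n}\to1$, so the contribution of $A^c$ vanishes on $A$. The bounds on $x_n-y_n$ follow in the same way from $e^{-tu}-e^{-2tu}$, which is bounded above and below on $u\in[a,b]$, using $y_n^{Z_n}-x_n^{Z_n}$.

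\textbf{The summability~\eqref{eq:v-over-u-product-conv}.} Write $\prod_{n=N_1}^{m-1} V_n/U_n = \frac{(x_{N_1}-y_{N_1})(1-x_m)}{(1-x_{N_1})(x_m-y_m)}$, which is bounded above and below by~\eqref{eq:seneta-rate}. Convexity of $f_n$ gives $V_n\le f_n'(x_{n+1})\le U_n$ when $y_{n+1}\le x_{n+1}$, since the chord slope on $[y,x]$ is at most the chord slope on $[x,1]$. So $1-V_n/U_n\in[0,1]$, and a bounded-below product of such terms has a summable complement by~\eqref{eq:product-identity}.

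I expect the main obstacle to be the upper bound on $1-x_n$, uniformly with constant $\gamma^{-1}$ and with the correct choice of $t$.
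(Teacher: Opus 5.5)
Your proposal is correct and follows essentially the paper's argument: you sandwich $x_n^{Z_n}=\lim_j \E{e^{-tZ_{m_j}/C_{m_j}}\mid\mathcal F_n}$ (and likewise $y_n^{Z_n}$ and $x_n^{Z_n}-y_n^{Z_n}$) using $\Prob{A\mid\mathcal F_n}\to\mathbf 1_A$, convert to deterministic bounds via $aC_n\le Z_n\le bC_n$ at a good realisation, and finish with convexity ($V_n\le U_n$), the telescoping product and~\eqref{eq:product-identity}. The obstacle you anticipate for the upper bound on $1-x_n$ is not real: $e^{-tZ_{m_j}/C_{m_j}}\mathbf 1_{A^c}\ge 0$ gives $x_n^{Z_n}\ge e^{-tb}\Prob{A\mid\mathcal F_n}$ directly, which is exactly the paper's bound with $t<a\gamma^{-1}/(2b)$, and the same bound with $2t$ controls $x_n-y_n\le 1-y_n$ from above.
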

\begin{clm} \label{clm:a-k-b-k}
    For  $0 < y < x < 1$, $k \in \mathbb{N}_0$ set \[A_{k} := \frac{1-x^{k}}{1-x} \quad \text{and} \quad B_{k} := \frac{x^{k} - y^{k}}{x-y}.\] Then, for all $k \in \mathbb{N}_0$ 
    \begin{equation} \label{eq:trunc-comparison-bound}
        \left|A_{k} - k \mathbf{1}_{k \leq (1-x)^{-1}} \right| \leq 3 (A_{k} - B_{k}). 
    \end{equation}
\end{clm}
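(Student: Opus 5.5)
The plan is to write both quantities as finite geometric-type sums and compare them term by term. We have
\[
A_k=\sum_{i=0}^{k-1}x^i, \qquad B_k=\sum_{i=0}^{k-1}x^i y^{k-1-i}.
\]
Since $y<x$, we get $1-y^{j}\geq 1-x^{j}$ for every $j$. Hence
\[
A_k-B_k=\sum_{i=0}^{k-1}x^i\bigl(1-y^{k-1-i}\bigr)\;\geq\;\sum_{i=0}^{k-1}\bigl(x^i-x^{k-1}\bigr)=A_k-kx^{k-1}\geq 0 .
\]
Equivalently $B_k\leq kx^{k-1}$. I will only use this lower bound $A_k-B_k\geq A_k-kx^{k-1}$, so $y$ drops out entirely. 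The cases $k\in\{0,1\}$ are trivial because both sides of~\eqref{eq:trunc-comparison-bound} vanish. So assume $k\geq 2$ and split according to the indicator.

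\emph{Case $k\leq (1-x)^{-1}$.} Here $A_k\leq k$, so the left side of~\eqref{eq:trunc-comparison-bound} is $k-A_k=\sum_{i=0}^{k-1}(1-x^i)$. Reindexing the lower bound with $i\mapsto k-1-i$ gives
\[
A_k-kx^{k-1}=\sum_{i=0}^{k-1}x^{k-1-i}(1-x^{i})\;\geq\; x^{k-1}\sum_{i=0}^{k-1}(1-x^i)=x^{k-1}(k-A_k).
\]
So it suffices to show $x^{k-1}\geq 1/3$. The case assumption gives $x\geq 1-1/k$, and therefore
\[
x^{k-1}\geq (1-1/k)^{k-1}\geq e^{-1}>1/3 .
\]
This settles the first case.

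\emph{Case $k>(1-x)^{-1}$.} Now the indicator vanishes and we need $A_k\leq 3(A_k-B_k)$, i.e.\ $B_k\leq \tfrac23 A_k$. Using $B_k\leq kx^{k-1}$, it is enough to prove
\[
\varphi_k(x):=\frac{kx^{k-1}}{A_k}=\Bigl(\frac1k\sum_{j=0}^{k-1}x^{-j}\Bigr)^{-1}\leq \frac23 \qquad \text{for } x<1-1/k .
\]
The average of $x^{-j}$ is decreasing in $x$, so $\varphi_k$ is increasing on $(0,1)$. It therefore suffices to check the endpoint $x=1-1/k$, where
\[
\varphi_k(1-1/k)=\frac{(1-1/k)^{k-1}}{1-(1-1/k)^k}.
\]
For $k=2$ this equals exactly $(1/2)/(3/4)=2/3$. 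For $k\geq 3$, the numerator is at most $(1-1/k)^{k-1}\leq 4/9$ (decreasing towards $e^{-1}$) and $(1-1/k)^k\leq 8/27$, so the ratio is at most $(4/9)/(19/27)=12/19<2/3$.

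The main obstacle is this second case: the constant $3$ is tight at $k=2$, $x\to 1/2$. So the reduction to the monotone function $\varphi_k$ and the endpoint check must be done carefully, rather than with cruder exponential bounds like $x^k\leq e^{-k(1-x)}$, which lose the constant when $x$ is not close to $1$.
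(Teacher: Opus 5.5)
Your argument follows the paper's proof closely. Both use the reduction $A_k-B_k\geq A_k-kx^{k-1}$ (you obtain it term by term, the paper via convexity), the same case split on $k(1-x)\leq 1$, and, in the second case, the reduction to the increasing ratio $\varphi_k$ evaluated at $x=1-1/k$. Your first case is correct and slightly slicker than the paper's. The paper sandwiches $k-A_k$ and $A_k-kx^{k-1}$ against $\tfrac12 k(k-1)(1-x)$ using the mean value theorem, whereas your direct comparison $A_k-kx^{k-1}\geq x^{k-1}(k-A_k)$ gives the same constant $e$ without calculus.

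The gap is in the endpoint check for $k\geq 3$. The bound $(1-1/k)^k\leq 8/27$ is false for $k\geq 4$. Unlike $(1-1/k)^{k-1}$, the sequence $(1-1/k)^k$ \emph{increases} to $e^{-1}$; already $(3/4)^4=81/256>8/27$. So the numerator and the denominator of $\varphi_k(1-1/k)=(1-1/k)^{k-1}/\bigl(1-(1-1/k)^k\bigr)$ both decrease in $k$, and you cannot bound each by its value at $k=3$.

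Correcting the constant does not rescue the step. The valid uniform bounds $(1-1/k)^{k-1}\leq 4/9$ and $1-(1-1/k)^k>1-e^{-1}$ only give roughly $0.703>2/3$, so a finer argument is needed. One simple repair:
\begin{itemize}
\item For $k=3,4$, compute exactly: the values are $12/19$ and $108/175$, both below $2/3$.
\item For $k\geq 5$, use $(1-1/k)^{k-1}\leq (4/5)^4=256/625$ together with $1-(1-1/k)^k>1-e^{-1}>0.632$. This gives a ratio below $0.65$.
\end{itemize}
The paper itself only asserts this endpoint bound, and its displayed expression has a misprint ($1/(k-1)$ in place of $1/k$). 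An explicit, correct verification of this step is therefore worth including.
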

Given the two claims above, by~\eqref{eq:seneta-rate}, Claim~\ref{clm:a-k-b-k} is applicable for all $n \geq N_1$. By applying Claim~\ref{clm:a-k-b-k} with $y = y_{n+1}$ and $x = x_{n+1}$, and the definitions of $A_{k}$ and $B_{k}$ adapted accordingly, note that
\begin{equation} \label{eq:adapting-a-k-b-k-to-exp}
\E{A_{X_{n}}} = \frac{1 - f_{n}(x_{n+1}) }{1 - x_{n+1}} \stackrel{\eqref{eq:u-n-def}}{=} U_{n} \quad \text{ and } \quad \E{B_{X_{n}}} = \frac{f_{n}(x_{n+1}) - f_{n}(y_{n+1}) }{x_{n+1} - y_{n+1}} \stackrel{\eqref{eq:u-n-def}}{=} V_{n}. 
\end{equation}
Thus, by Jensen's inequality and~\eqref{eq:trunc-comparison-bound}, if we set $R_{n} := (1-x_{n+1})^{-1}$ 
\[
\left|U_{n} - \E{X_{n} \mathbf{1}_{X_{n} \leq R_{n}}} \right| \leq \E{\left|A_{X_n} - X_{n} \mathbf{1}_{X_{n} \leq R_{n}}\right|} \stackrel{\eqref{eq:trunc-comparison-bound}}{\leq} 3 \E{A_{X_{n}} - B_{X_{n}}} \stackrel{\eqref{eq:adapting-a-k-b-k-to-exp}}{=} 3\left(U_{n} - V_{n}\right),
\]
thus, dividing both sides by $U_n$ and summing over $n$, 
\begin{equation} \label{eq:abs-summ}
 \sum_{n=N_1}^{\infty} \left|1 - \frac{\E{X_{n} \mathbf{1}_{X_{n} \leq R_{n}}}}{U_n} \right| \leq 3 \sum_{n = N_1}^{\infty} \left(1 - \frac{V_n}{U_n}\right) \stackrel{\eqref{eq:v-over-u-product-conv}} < \infty.
\end{equation}
Since absolute summability implies summability, we deduce that 
\begin{linenomath}
\begin{align} \label{eq:conv-product-one}
\nonumber\limsup_{m \to \infty}\prod_{n=N_1}^{m} \frac{\E{X_{n} \mathbf{1}_{X_{n} \leq R_n}}}{U_n} & = \limsup_{m \to \infty}\prod_{n=N_1}^{m}\left(1 -   \left(1 - \frac{\E{X_{n} \mathbf{1}_{X_{n} \leq R_n}}}{U_n}\right) \right) \\ &\leq \exp{\left(\sum_{n=N_1}^{\infty} \left(1 - \frac{\E{X_{n} \mathbf{1}_{X_{n} \leq R_{n}}}}{U_n} \right) \right)} < \infty. 
\end{align}
\end{linenomath}
Thus, since the $U_{n}$ product telescopes, 
\begin{linenomath}
    \begin{align} \label{eq:second-last}
\nonumber        \liminf_{m \to \infty} \prod_{n=N_1}^{m}
\frac{C_{n+1}}{C_n\E{X_{n} \mathbf{1}_{X_{n} \leq R_n}}} & = \liminf_{m \to \infty} \prod_{n=N_1}^{m}\frac{C_{n+1}}{C_n U_n} \prod_{n=N_1}^{m}
\frac{U_n}{\E{X_{n} \mathbf{1}_{X_{n} \leq R_{n}}}}\\
& \stackrel{\eqref{eq:u-n-def}}{=} \liminf_{m \to \infty}
\frac{C_m(1-x_m)}
{C_N(1-x_N)}
\prod_{n=N}^{m-1}
\frac{U_n}{\E{X_{n} \mathbf{1}_{X_{n} \leq R_{n}}}} \stackrel{\eqref{eq:seneta-rate}, \eqref{eq:conv-product-one}}{>} 0.
    \end{align}
\end{linenomath}
Finally, since again by~\eqref{eq:seneta-rate} we have $R_{n} = (1-x_{n+1})^{-1} > \gamma C_{n+1}$, we deduce~\eqref{eq:most-annoying-part} from~\eqref{eq:second-last}. 
\end{proof}
It remains to complete the proofs of the claims. 
\begin{proof}[Proof of Claim~\ref{clm:x-n-y-n}]
Given $\gamma > 0$, with $a, b$ as defined in~\eqref{eq:def-A}, choose $t \in (0, a \gamma^{-1}/(2b))$ and let $(x_{n})_{n \in \mathbb{N}_0}$ and $(y_{n})_{n \in \mathbb{N}_0}$ be defined with this $t$.   Given the sequence $(m_{j})_{j \in \mathbb{N}}$ defining $(x_{n})_{n \in \mathbb{N}_0}$ and $(y_{n})_{n \in \mathbb{N}_0}$, note that on the event $A$,
\[
e^{-tb}
\leq
e^{-tZ_{m_j}/C_{m_j}}
\leq
e^{-ta},
\]
so that, taking conditional expectations of both sides 
\[
e^{-tb} \Prob{A \mid \mathcal{F}_n} \leq \underbrace{\E{e^{-tZ_{m_j}/C_{m_j}} \, \bigg | \, \mathcal{F}_n}}_{ = \, \left(F_{n, m_{j}}\left(e^{-t/C_{m_j}}\right)\right)^{Z_{n}}} \leq e^{-ta} \Prob{A \mid \mathcal{F}_n} + (1 - \Prob{A \mid \mathcal{F}_n})
\]
Hence, by taking limits in $j$, 
\[
e^{-tb}\mathbb P(A\mid\mathcal F_n)
\leq
x_{n}^{Z_n}
\leq
e^{-ta}\mathbb P(A\mid\mathcal F_n)
+
1-\mathbb P(A\mid\mathcal F_n).
\]
Letting $n\to\infty$, and applying similar reasoning to $(y_n)_{n \in \mathbb{N}_0}$, by the L\'evy zero-one law, we obtain
\[
e^{-tb}\leq x_\infty\leq e^{-ta} \quad \text{ and } \quad 
e^{-2tb}\leq y_\infty\leq e^{-2ta}
\quad \text{ on } A,
\]
where $x_{\infty}, y_{\infty}$ denote the almost sure limits of the bounded martingales $(x_n^{Z_{n}})_{n \in \mathbb{N}_0}$ and $(y_n^{Z_{n}})_{n \in \mathbb{N}_0}$.
Moreover, setting $c_0 := \min_{u\in[a,b]}
\left(e^{-tu}-e^{-2tu}\right)>0$,
we have
\[
e^{-tZ_{m_j}/C_{m_j}}
-
e^{-2tZ_{m_j}/C_{m_j}}
\geq c_0\mathbf 1_A, \quad \text{and thus} \quad 
x_\infty-y_\infty \geq c_0
\quad \text{ on } A.  
\]
It follows that there exists a positive probability event $B \subseteq A$ and $N_1 \in \mathbb{N}$ such that, for all $n \geq N_1$
\begin{equation} \label{eq:bounds-on-b}
e^{-2tb} \leq x_{n}^{Z_{n}} \leq e^{-ta/2}, \quad y_{n}^{Z_n} \geq e^{-3tb} \quad \text{ and } \quad x_{n}^{Z_n} - y_{n}^{Z_n} \geq \frac{c_0}{2} \quad \text{ on } B. 
\end{equation}
Now, by noting that on $B$ we have $aC_n \leq Z_{n} \leq bC_n$, Equation~\eqref{eq:seneta-rate} now follows from~\eqref{eq:bounds-on-b} by the choice of $t$ and exploiting the elementary inequalities for $x > y > 0$ that $n y^{n-1} (x-y) \leq x^{n} - y^{n} \leq n x^{n-1} (x-y)$, and $x \geq 1 -e^{-x} \geq \frac{x}{1+x}$. (Note that these are deterministic bounds since $x_n$ and $y_n$ are deterministic).

Next, note that by convexity of the function $f_{n}$, the mean value theorem, and the fact $y_n < x_n < 1$, we have $V_{n} \leq U_{n}$, and since the product telescopes
\[
\prod_{n=N_1}^{\infty} \frac{V_{n}}{U_{n}} = \lim_{m \to \infty} \prod_{n=N_1}^{\infty} \frac{V_{n}}{U_{n}} = \lim_{m \to \infty} \frac{x_{N_{1}} - y_{N_{1}}}{1-x_{N_{1}}} \times \frac{1 - x_{m+1}}{x_{m+1} - y_{m+1}} \stackrel{\eqref{eq:seneta-rate}}{\geq} \frac{d_1 d_2 \gamma}{d_3}. 
\]
Equation~\eqref{eq:v-over-u-product-conv} now follows from properties of infinite products (Equation~\eqref{eq:product-identity}). 
\end{proof}

Finally, we prove Claim~\ref{clm:a-k-b-k}. 
\begin{proof}[Proof of Claim~\ref{clm:a-k-b-k}]
For $k=0$ note that $A_0 = B_0 = 0$, whilst for $k=1$, since we always have $(1-x)^{-1} > 1$, again both sides of~\eqref{eq:trunc-comparison-bound} are $0$. It suffices to consider $k \geq 2$. 
Since the function $u\mapsto u^k$ is convex and $y<x$, we have $A_{k} \geq B_{k}$, 
\begin{equation} \label{eq:b-k-bound}
B_k=\frac{x^k-y^k}{x-y}\leq kx^{k-1},
\qquad\text{hence}\qquad
A_k-B_k\geq A_k-kx^{k-1}.
\end{equation}
If $k(1-x) \leq 1$, then 
\begin{equation} \label{eq:k-a-k}
k - A_{k} = \sum_{j=0}^{k-1} (1- x^{j}) \leq (1-x) \sum_{j=0}^{k-1} j = \frac{(1-x)k(k-1)}{2}, 
\end{equation}
where we use $1 - x^{j} \leq j(1-x)$. 
On the other hand, applying the mean-value theorem
\begin{linenomath}
\begin{align} \label{eq:ingredient-1}
\nonumber  A_{k} - kx^{k-1} & = \frac{k}{1-x}\int_{x}^{1} u^{k-1} - x^{k-1} \dd u \geq \frac{k(k-1)x^{k-2}}{1-x} \int_{x}^{1} (u-x) \dd u \\ & = \frac{k(k-1)(1-x)x^{k-2}}{2}   \geq \frac{k(k-1)(1-x)\left(1 - 1/k\right)^{k-2}}{2}.  
\end{align}
\end{linenomath}
Note that for all $k \geq 2$ we have $(1-1/k)^{k-2} > e^{-1}$, hence by~\eqref{eq:b-k-bound},~\eqref{eq:k-a-k} and~\eqref{eq:ingredient-1}, for $k(1-x) \leq 1$ we have   
\begin{equation} \label{eq:case-1-annoying}
k-A_{k} \leq e(A_{k} - B_{k}).
\end{equation}
On the other hand, suppose that $k(1-x)>1$. Then $x<1-1/k$. We then have
\[
\frac{kx^{k-1}}{A_k}
=
\frac{kx^{k-1}}{\sum_{j=0}^{k-1}x^j}
=
\frac{k}{\sum_{j=0}^{k-1}x^{-j}} = \frac{kx^{k-1}(1-x)}{1-x^{k}} < \frac{k (1-1/k)^{k-1}1/(k-1)}{1 - (1-1/k)^{k}} \leq \frac{2}{3}, 
\]
where in the inequality we use the fact (from the second equality) that the term on the left is increasing on $x \in (0,1)$. 
Therefore, by~\eqref{eq:b-k-bound}, for $k(1-x) > 1$
\[
3(A_{k} - B_{k}) \geq A_{k},
\]
hence, in all cases
$\left|A_k-k\mathbf 1_{{k\leq(1-x)^{-1}}}\right|
\leq
3(A_k-B_k)$, as required in~\eqref{eq:trunc-comparison-bound}.

\end{proof}

\subsection{Proof of Proposition~\ref{prop:gamma-arbitrary}}
In this section we prove Proposition~\ref{prop:gamma-arbitrary}, which implies the truth of Remark~\ref{rmq:gamma-arbitrary}. 
\begin{prop} \label{prop:gamma-arbitrary}
The following are equivalent:
\begin{enumerate}
    \item \label{item:gamma-arbitrary-1} Equations~\eqref{eq:truncation-determines-original},~\eqref{eq:rate-of-growth-mean-comparison} and~\eqref{eq:truncation-converges}, are satisfied for some $\gamma > 0$.
    \item \label{item:gamma-arbitrary-2} Equations~\eqref{eq:truncation-determines-original},~\eqref{eq:rate-of-growth-mean-comparison} and~\eqref{eq:truncation-converges} are satisfied for all $\gamma' > 0$ satisfying $\E{Y^{(\gamma')}_{n}} > 0$ for all $n \in \mathbb{N}_0$.
\end{enumerate}
\end{prop}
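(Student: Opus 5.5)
The implication from Item~\ref{item:gamma-arbitrary-2} to Item~\ref{item:gamma-arbitrary-1} is trivial: some $\gamma'$ with $\E{Y^{(\gamma')}_n}>0$ for all $n$ exists, since for $\gamma'$ large the truncation keeps all small positive values. For the converse, the cleanest route is through Theorem~\ref{thm:rate-of-growth} itself rather than by comparing the sums directly.

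First, assume Item~\ref{item:gamma-arbitrary-1} holds for some $\gamma$. By the sufficiency part of Theorem~\ref{thm:rate-of-growth}, $(C_n)$ is a growth scale of $(Z_n)$. Now fix any $\gamma'>0$ with $\E{Y^{(\gamma')}_n}>0$ for all $n$. We rerun the necessity argument with $\gamma'$ in place of the $\gamma$ produced there. That argument used a specific $\gamma$, namely the upper constant $b$ in the event $\{aC_n<Z_n<bC_n\ \forall n\}$. So the first task is to show that we may take $b=\gamma'$, i.e.\ that
\[
\Prob{\forall n\colon aC_n<Z_n<\gamma' C_n}>0
\]
for some $a>0$.

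This is the main obstacle when $\gamma'$ is small. I would handle it in two steps.

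\begin{itemize}
\item[(a)] Prove \eqref{eq:truncation-determines-original} for $\gamma'$ directly. Since $\Prob{X_n>\gamma' C_{n+1}}$ exceeds $\Prob{X_n>\gamma C_{n+1}}$ only for $\gamma'<\gamma$, we must control
\[
\sum_n C_n\Prob{\gamma' C_{n+1}<X_n\leq\gamma C_{n+1}}.
\]
Each such offspring value contributes at least $\gamma' C_{n+1}$ to $\E{Y^{(\gamma)}_n}$-type quantities. I would use the growth-scale event together with a Borel--Cantelli / chain-rule argument as in the necessity proof. On $\{Z_n\geq aC_n\}$, infinitely many generations in which some individual has between $\gamma' C_{n+1}$ and $\gamma C_{n+1}$ children is not by itself contradictory, so a different tool is needed. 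Instead I would use \eqref{eq:truncation-converges} for $\gamma$. Since $\E{\widehat Y^{(\gamma)}_n}=\E{(Y^{(\gamma)}_n)^2}/\E{Y^{(\gamma)}_n}$, and using \eqref{eq:rate-of-growth-mean-comparison-2} to replace $\tilde M^{(\gamma)}_{n+1}$ by $C_{n+1}$ and $\E{Y^{(\gamma)}_n}$ by $C_{n+1}/C_n$ up to constants, we get
\[
\frac{\E{\widehat Y^{(\gamma)}_n-1}}{\tilde M^{(\gamma)}_{n+1}}\asymp \frac{C_n}{C_{n+1}^2}\,\E{X_n^2\mathbf 1_{X_n\leq\gamma C_{n+1}}}-\frac{1}{\tilde M^{(\gamma)}_n}.
\]
The term $\sum 1/\tilde M^{(\gamma)}_n$ needs care; in the nondegenerate case $1/\tilde M$ is summable or handled via \eqref{eq:limsup-mean}. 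The leading term is at least $\gamma'^2 C_n\Prob{\gamma' C_{n+1}<X_n\leq\gamma C_{n+1}}$. So \eqref{eq:truncation-converges} for $\gamma$ gives \eqref{eq:truncation-determines-original} for $\gamma'$.
\item[(b)] Compare truncated means. We have
\[
\bigl|\E{Y^{(\gamma)}_n}-\E{Y^{(\gamma')}_n}\bigr|\leq \max(\gamma,\gamma')\,C_{n+1}\,\Prob{X_n\in(\gamma\wedge\gamma' C_{n+1},\gamma\vee\gamma' C_{n+1}]}.
\]
Dividing by $\E{Y^{(\gamma)}_n}\asymp C_{n+1}/C_n$, the relative difference is $O\bigl(C_n\Prob{\cdot}\bigr)$, which is summable by (a). The same bound applies with $\gamma,\gamma'$ swapped, using \eqref{eq:truncation-determines-original} for the larger parameter. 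By \eqref{eq:product-identity}, $\tilde M^{(\gamma')}_n/\tilde M^{(\gamma)}_n$ converges to a positive finite limit. This needs the assumption $\E{Y^{(\gamma')}_n}>0$ so that no factor vanishes, which is exactly why that hypothesis appears. This yields \eqref{eq:rate-of-growth-mean-comparison} for $\gamma'$.
\end{itemize}

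Finally, for \eqref{eq:truncation-converges} with $\gamma'$, compare second moments in the same way. The expression $\E{(Y^{(\gamma')}_n)^2}-\E{(Y^{(\gamma)}_n)^2}$ is bounded by $\max(\gamma,\gamma')^2C_{n+1}^2$ times the probability of the intermediate window. After normalising by $\E{Y_n}\tilde M_{n+1}\asymp C_{n+1}^2/C_n$, the difference of the summands is $O(C_n\Prob{\cdot})$, which is summable. Alternatively, and more robustly, having established \eqref{eq:truncation-determines-original} and \eqref{eq:rate-of-growth-mean-comparison} for $\gamma'$, one can note that $(\tilde M^{(\gamma')}_n)$ is comparable to a rate of growth of $\tilde Z^{(\gamma')}$. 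This follows because $\tilde Z^{(\gamma')}$ coincides with $Z$ with positive probability on the growth-scale event, by the chain-rule argument of the sufficiency proof. Theorem~\ref{thm:unif-integrable} (Item~\ref{item:deg-limit}, with $K=\gamma' s_2$) then forces \eqref{eq:truncation-converges} for $\gamma'$. I expect step (a), extracting the tail bound for smaller $\gamma'$ from the second-moment condition, to be the delicate point.
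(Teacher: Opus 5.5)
\textbf{Gap in step (a).} Your working route matches the paper's. Once the unused plan of rerunning the necessity argument with $b=\gamma'$ is dropped, you (a) control the window $\{\gamma'C_{n+1}<X_n\le\gamma C_{n+1}\}$ by the second-moment information in \eqref{eq:truncation-converges}, (b) get \eqref{eq:rate-of-growth-mean-comparison} for $\gamma'$ via \eqref{eq:product-identity}, and (c) compare second moments. Step (b) is fine given (a). Step (a), which you rightly flag as delicate, does not work as written.

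\begin{itemize}
\item You write the summand of \eqref{eq:truncation-converges} as a second-moment term minus $1/\tilde M^{(\gamma)}_{n+1}$, and then need $\sum_n 1/\tilde M^{(\gamma)}_n<\infty$. This is false in general: \eqref{eq:limsup-mean} only gives boundedness.
\item Replacing one term of a difference by something merely comparable ($\asymp$) is not legitimate either.
\item Take $X_n\equiv1$, $C_n\equiv1$, $\gamma=2$, $\gamma'=1/2$. Item 1 holds at $\gamma$ and every summand of \eqref{eq:truncation-converges} is $0$. Yet $\sum_n C_n\Prob{\gamma'C_{n+1}<X_n\le\gamma C_{n+1}}=\infty$.
\item Your step (a) never uses $\E{Y^{(\gamma')}_n}>0$, so it would ``prove'' this false bound. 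In the example the failure sits exactly at $\sum_n 1/\tilde M^{(\gamma)}_n=\infty$.
\end{itemize}

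\textbf{The fix.} This is essentially what the paper does: do not split off the $-1$. With $Y_n=Y^{(\gamma)}_n$, write $\E{\widehat Y_n-1}=\E{Y_n(Y_n-1)}/\E{Y_n}$. The hypothesis $\E{Y^{(\gamma')}_n}>0$ forces $\gamma'C_{n+1}\geq1$, so on the window $Y_n\geq2$ and $Y_n(Y_n-1)\geq(\gamma'C_{n+1})^2/2$. By \eqref{eq:rate-of-growth-mean-comparison-2}, $\E{Y_n}\tilde M^{(\gamma)}_{n+1}=(\tilde M^{(\gamma)}_{n+1})^2/\tilde M^{(\gamma)}_n\le\kappa\,C_{n+1}^2/C_n$ for some constant $\kappa$. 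Hence
$C_n\Prob{\gamma'C_{n+1}<X_n\le\gamma C_{n+1}}\le \frac{2\kappa}{(\gamma')^2}\,\E{\widehat Y_n-1}/\tilde M^{(\gamma)}_{n+1}$, which is summable. So the positivity hypothesis is needed already in (a), not only to keep the factors in (b) nonzero.

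\textbf{Final step.} The same issue appears there. The summands for $\gamma$ and $\gamma'$ do not differ by $O(C_n\Prob{\cdot})$: the $-1/\tilde M$ terms and the two normalisations are only comparable up to constants, not asymptotically equal. Instead, compare $\E{Y^{(\gamma')}_n(Y^{(\gamma')}_n-1)}$ with $\E{Y^{(\gamma)}_n(Y^{(\gamma)}_n-1)}$.
\begin{itemize}
\item If $\gamma'<\gamma$, the first is dominated pointwise by the second.
\item If $\gamma'>\gamma$, the excess is at most $(\gamma'C_{n+1})^2\Prob{X_n>\gamma C_{n+1}}$. This is controlled by \eqref{eq:truncation-determines-original} at the \emph{smaller} parameter $\gamma$, not the larger one as you wrote in (b).
\end{itemize}
Then use that the normalisations are comparable.

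\textbf{Other remarks.}
\begin{itemize}
\item Your ``more robust'' alternative for the last step is circular. The chain-rule argument of the sufficiency proof presupposes that $\tilde Z^{(\gamma')}$ already stays in $[aC_n,bC_n]$ with positive probability. Passing from $Z$ to $\tilde Z^{(\gamma')}$ is exactly the $b=\gamma'$ obstacle you left unresolved.
\item Your reason that an admissible $\gamma'$ exists is not uniform in $n$. It fails outright if, for example, $\mu_0$ is supported on $\{0,\infty\}$; then Item 2 is vacuous and Item 1 is false. So that direction tacitly assumes an admissible $\gamma'$ exists, as does the paper's ``immediate''.
\end{itemize}
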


\begin{proof}
    We need only prove that Item~\ref{item:gamma-arbitrary-1} implies Item~\ref{item:gamma-arbitrary-2} since the other direction is immediate. First we prove Item~\ref{item:gamma-arbitrary-2} when $\gamma' < \gamma$. In this case, since $\E{Y^{(\gamma')}_{n}} = \E{X_{n}\mathbf{1}_{X_{n} \leq \gamma'C_{n+1}}} > 0$, it must be the case that $\gamma'C_{n+1} \geq 1$. Then, for~\eqref{eq:truncation-determines-original}, note that 
    \begin{linenomath}
    \begin{align*}
    \sum_{n=0}^{\infty} C_{n} \Prob{X_{n} > \gamma' C_{n+1}} & \leq \sum_{n=0}^{\infty} C_{n} \Prob{X_{n} > \gamma C_{n+1}} + \sum_{n=0}^{\infty} C_{n} \Prob{Y^{(\gamma)}_{n} \geq \gamma' C_{n+1} + 1}
    \\ & \leq \sum_{n=0}^{\infty} C_{n} \Prob{X_{n} > \gamma C_{n+1}} + \sum_{n=0}^{\infty} \frac{C_{n}\E{\left(\widehat{Y}^{(\gamma)}_{n} - 1\right)\mathbf{1}_{\widehat{Y}^{(\gamma)}_{n} \geq \gamma'C_{n+1} + 1}}}{(\gamma'C_{n+1})^2} < \infty, 
    \end{align*}
\end{linenomath}
where the last line follows from the fact that~\eqref{eq:truncation-converges} is satisfied for $\gamma$. Next, writing 
\begin{equation} \label{eq:means-inf-product}
\frac{\E{Y^{(\gamma')}_{n}}}{\E{Y^{(\gamma
)}_{n}}} \geq 1 - \frac{\gamma C_{n+1} \Prob{Y^{(\gamma)}_{n} > \gamma' C_{n+1}}}{\E{Y^{(\gamma)}_{n}}}, \quad \text{we have} \quad 1 \geq \prod_{n=0}^{\infty} \frac{\E{Y^{(\gamma')}_{n}}}{\E{Y^{(\gamma
)}_{n}}} > 0 
\end{equation}
by properties of infinite products (\eqref{eq:product-identity}), Equation~\eqref{eq:rate-of-growth-mean-comparison} for $\gamma$ and~\eqref{eq:truncation-determines-original} for $\gamma'$. This implies that $\gamma'$ satisfies~\eqref{eq:truncation-determines-original}. Finally,~\eqref{eq:truncation-converges} follows from monotonicity and the fact $\gamma'$ satisfies~\eqref{eq:truncation-determines-original}.

For $\gamma' > \gamma$,~\eqref{eq:truncation-determines-original} is immediate from monotonicity, whilst a similar argument to~\eqref{eq:means-inf-product}, but instead with an upper bound, shows that $\gamma'$ satisfies~\eqref{eq:rate-of-growth-mean-comparison}. Since~\eqref{eq:rate-of-growth-mean-comparison} is satisfied, for~\eqref{eq:truncation-converges}, by Lemma~\ref{lem:three-series-positive}, it suffices to show
\[
\sum_{n=0}^{\infty} \frac{\E{\widehat{Y}^{(\gamma')}_{n} - 1}}{C_{n+1}} < \infty. 
\]
But then, again using Lemma~\ref{lem:three-series-positive}, this follows from the fact that 
\begin{linenomath}
\begin{align*}
\sum_{n=0}^{\infty} \frac{\E{\widehat{Y}^{(\gamma')}_{n} - 1}}{C_{n+1}} & \leq \sum_{n=0}^{\infty} \frac{\E{\left(\widehat{Y}^{(\gamma')}_{n} - 1\right)\mathbf{1}_{\widehat{Y}^{(\gamma')}_{n} \leq \gamma C_{n+1}}}}{C_{n+1}} + \sum_{n=0}^{\infty} \frac{(\gamma' C_{n+1})^2 \Prob{\widehat{Y}^{(\gamma)}_{n} > \gamma C_{n+1}}}{C_{n+1}}
\\ & \leq \sum_{n=0}^{\infty} \frac{\E{\left(\widehat{Y}^{(\gamma)}_{n} - 1\right)}}{C_{n+1}} + \sum_{n=0}^{\infty} \frac{(\gamma')^2 C_{n+1} \Prob{X_{n} > \gamma C_{n+1}}}{\E{Y^{(\gamma)}_{n}}} < \infty,
\end{align*}
\end{linenomath}
where the last term is finite since $C_{n+1}/\E{Y^{(\gamma)}_{n}} \leq \kappa C_{n}$ for some $\kappa > 0$ since $\gamma$ satisfies~\eqref{eq:rate-of-growth-mean-comparison}, and moreover, since $\gamma$ satisfies~\eqref{eq:truncation-determines-original}. 
\end{proof}
\section{Proof of Corollary~\ref{cor:lindval}} \label{sec:cor}
\begin{proof}[Proof of Corollary~\ref{cor:lindval}]
In order for $Z_{n}$ to be bounded above and below, the constant sequence $(1)_{n \in \mathbb{N}_0}$ must be a growth scale. We first show that the conditions in Theorem~\ref{thm:rate-of-growth} are equivalent to $\sum_{n=0}^{\infty} \Prob{X_{n} \neq 1} < \infty$. 

Indeed, if by Equation~\eqref{eq:rate-of-growth-mean-comparison} is satisfied, for some $\gamma > 0$, we must have $\tilde{M}^{(\gamma)}_{n}$ bounded above and below. Note that this also implies that $\E{Y^{(\gamma)}}_{n}$ is bounded above and below, hence, 
\[
 \sum_{n=0}^{\infty} \E{\widehat{Y}^{(\gamma)}_{n} - 1} < \infty, \quad \text{which, alongside~\eqref{eq:truncation-determines-original}, holds iff} \quad \sum_{n=0}^{\infty} \Prob{X_{n} \geq 2} < \infty. 
\]
Then, since $\E{Y^{(\gamma)}_{n}} - 1 \leq \gamma \Prob{X_{n} \geq 2} - \Prob{X_{n}=0}$, the inequality $x \leq e^{x-1}$ implies that
\begin{equation} \label{eq:one-side-tail}
0 < \sup_{n \in \mathbb{N}_0}\tilde{M}^{(\gamma)}_{n} \leq \exp\left(\gamma \sum_{n=0}^{\infty}\Prob{X_{n} \geq 2} - \sum_{n=0}^{\infty} \Prob{X_{n} = 0}\right),
\end{equation}
hence we must also have $\sum_{n=0}^{\infty} \Prob{X_{n} = 0} < \infty$. It suffices to show that $\sum_{n=0}^{\infty} \Prob{X_{n} \neq 1} < \infty$ implies that for some $\gamma > 0$ $\lim_{n \to \infty} \tilde{M}^{(\gamma)}_{n}$ exists in $(0, \infty)$. This follows from the infinite product criterion
\[
\prod_{n=0}^{\infty} \alpha_{n} \quad \text{converges iff} \quad \sum_{n=0}^{\infty} \left|1 - \alpha_n\right| < \infty, 
\]
from which the required statement follows from the fact that 
\[
\left|\E{Y^{(\gamma}_{n}} - 1 \right| \leq \gamma \Prob{X_{n} \neq 1}. 
\]
Finally, since $0$ is an absorbing state, the corollary follows from the following claim:
\begin{clm} \label{clm:last}
Let $(Z_n)_{n\geq 0}$ be a BPVE with offspring law $\boldsymbol{\mu}$. Then
\[
    \mathbb P\left(
        \limsup_{n\to\infty} Z_n = \infty
        \text{ and }
        \liminf_{n\to\infty} Z_n < \infty
    \right)=0.
\]
\end{clm}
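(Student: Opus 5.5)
We show that on the event $\{\liminf_{n} Z_n < \infty\}$ the process cannot also have $\limsup_n Z_n = \infty$. The plan is a standard ``infinitely many chances to escape'' argument based on a Lévy-type conditional Borel--Cantelli lemma. Since $\infty$ is absorbing, once $Z_n = \infty$ the $\liminf$ is infinite. It therefore suffices to show, for each fixed $k \in \mathbb{N}$, that
\[
\Prob{Z_n \leq k \text{ infinitely often and } \limsup_{n\to\infty} Z_n = \infty} = 0,
\]
and then take the union over $k$.

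Fix $k$ and an integer $L > k$. The key comparison is between two kinds of events:
\begin{itemize}
\item $E_n := \{Z_{n+1} > L\}$, observed from generation $n$ with $Z_n \leq k$;
\item $F_n := \{Z_{n+1} \leq k\}$, observed from the same $Z_n \leq k$.
\end{itemize}
We split according to whether $\sum_n \Prob{X_n \neq 1}$ converges.

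\textbf{Case 1: $\sum_n \Prob{X_n \neq 1} < \infty$.} By Borel--Cantelli, all but finitely many individuals in generations where $Z_n \leq k$ reproduce with exactly one child, because there are at most $k$ individuals each such generation. Hence, once the process visits $[0,k]$ infinitely often, it is eventually constant. This contradicts $\limsup Z_n = \infty$.

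\textbf{Case 2: $\sum_n \Prob{X_n \neq 1} = \infty$.} Here we show that from any state $j \in \{1,\ldots,k\}$, the conditional probability of leaving $[0,L]$ downward or upward cannot persistently be small relative to the chances of staying. This is the delicate part. A cleaner route is to avoid it altogether using martingales, as follows. On $\{\liminf Z_n < \infty\}$, choose $j$ with $Z_n = j$ infinitely often. We want that $\limsup Z_n = \infty$ forces $Z_{n+1} \neq Z_n$ infinitely often. The mechanism is that an upcrossing from $\leq k$ to $> L$ must happen infinitely often, and each upcrossing starting from $Z_n \leq k$ requires some individual at generation $n$ to have $X_{n,i} \geq 2$.

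Let $p_n := \Prob{X_n \geq 2}$ and $q_n := \Prob{X_n = 0}$. The conditional Borel--Cantelli lemma (Lévy's extension) shows two things:
\begin{itemize}
\item if $\sum p_n \mathbf 1_{Z_n \leq k} < \infty$, then only finitely many upcrossings happen;
\item if $\sum q_n \mathbf 1_{Z_n \leq k} = \infty$, then $Z_n \leq k$ infinitely often forces, via the lower bound $q_n^{Z_n} \geq q_n^k$ on the conditional probability of extinction at the next step, extinction almost surely.
\end{itemize}

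So the remaining case is $\sum_n q_n \mathbf 1_{Z_n \le k} < \infty$ and $\sum_n p_n \mathbf 1_{Z_n \le k} = \infty$. In that case we must rule out infinitely many visits to $[0,k]$ at all, because the process should instead escape to infinity. This is the main obstacle: individuals with large offspring drive $Z$ upward, but we must argue the process does not return to $[0,k]$ infinitely often. The argument I would try first uses the bounded martingales $x_n^{Z_n}$ constructed as in the proof of Proposition~\ref{prop:annoying-part}, but with $C_m \equiv 1$. These give a harmonic function $h_n(z) = \Prob{Z_m \leq k \text{ for some } m \geq n \mid Z_n = z}$. By Lévy's zero-one law, $h_{n}(Z_n) \to \mathbf 1_{\{Z_m \leq k \text{ i.o.}\}}$.

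The proof then needs two bounds. The first is that $h_n(z) \leq \delta(L)$ uniformly in $n$ for $z > L$, with $\delta(L) \to 0$ as $L \to \infty$. Granting this, on $\{Z_n \le k \text{ i.o.},\ \limsup Z_n = \infty\}$ we would have $h_n(Z_n) \leq \delta(L) < 1$ infinitely often, contradicting convergence to $1$. This uniform bound is exactly where the difficulty lies, since returning from a large population to $[0,k]$ requires either many simultaneous deaths or a sustained subcritical stretch. The bound $h_n(z) \le h_n(1)^{\,z - k}$-type comparisons, via independence of the $z$ subtrees, only give decay if $h_n(1) \le \rho < 1$ uniformly. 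So the second bound, which is the hard step, is to show that $\sup_n h_n(1) < 1$ whenever $\Prob{Z_m \leq k \text{ i.o.}} $ is not already handled by the extinction case above.
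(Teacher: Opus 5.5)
\textbf{There is a genuine gap.} Your Case 1 is correct: there are at most $k$ individuals in the relevant generations and $\sum_n k\,\mathbb{P}(X_n\neq 1)<\infty$. But Case 2, which is the whole content of the claim, is not proved.

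First, the inference in your second bullet does not follow. Conditional Borel--Cantelli would need $\sum_n \mathbb{P}(X_n=0)^{k}\mathbf{1}_{\{1\le Z_n\le k\}}=\infty$. For $k\ge 2$ this is not implied by divergence of $\sum_n \mathbb{P}(X_n=0)\mathbf{1}_{\{Z_n\le k\}}$; take $\mathbb{P}(X_n=0)=1/n$.

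More importantly, for the remaining case you only sketch a strategy, and both of its key inputs are false as deterministic statements.
\begin{itemize}
\item As you defined it, $h_n(1)=1$ whenever $k\ge 1$, because $Z_n=1\le k$ already. So $\sup_n h_n(1)<1$ never holds.
\item The first bound fails even when survival has positive probability. Let $X_n=2$ except at sparse generations $n_j$, where $X_{n_j}=0$ with probability $1-1/j$ and $X_{n_j}=1$ otherwise. If $2^{n_j-n_{j-1}-1}\ge j^3$, the process survives with positive probability. Yet $h_{n_j}(z)\ge (1-1/j)^z\to 1$ for every fixed $z$, so $\sup_n\sup_{z>L}h_n(z)=1$ for every $L$.
\end{itemize}
Restricting to environments ``not already handled by the extinction case'' cannot rescue a deterministic bound, because your cases are random events, not properties of the environment. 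What is needed is control along the realised path.

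The missing idea, and the paper's route, is the extinction probability $q_n$ of a single individual in generation $n$. By L\'evy's zero-one law, $q_n^{Z_n}=\mathbb{P}(\text{extinction}\mid\mathcal{F}_n)$ converges almost surely to the indicator of extinction. Since $0$ is absorbing, $\{\limsup_n Z_n=\infty\}$ lies inside survival, so there $q_n^{Z_n}\to 0$. This forces $Z_n(1-q_n)\to\infty$ along every subsequence with $Z_n\to\infty$.

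Let $I_n$ be the number of generation-$n$ individuals with an infinite line of descent. It is conditionally $\mathrm{Bin}(Z_n,1-q_n)$ and non-decreasing in $n$. Cantelli's inequality along that subsequence, together with L\'evy's zero-one law, gives $I_n\to\infty$, hence $Z_n\ge I_n\to\infty$.

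Your $h_n$ framework can be completed with the same input. If $Z_m\le k$, then at least $z-k$ of the $z$ independent subtrees have died out, so $h_n(z)\le \mathbb{P}(\mathrm{Bin}(z,1-q_n)\le k)$. The path-dependent fact $Z_n(1-q_n)\to\infty$ then gives $h_n(Z_n)\to 0$ along that subsequence. This contradicts $h_n(Z_n)\to 1$ on $\{Z_n\le k\text{ i.o.}\}$.
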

\end{proof}
\begin{proof}[Proof of Claim~\ref{clm:last}]
For $n\geq 0$, let $q_n$ denote the extinction probability for the process
started from a single particle at generation $n$. Let $\mathcal{S}$ denote the event of survival, that is $\mathcal{S} := \left\{\liminf_{n \to \infty} Z_{n} \geq 1\right\}$ Thus, conditionally on
$Z_n$,
\[
    \Prob{\mathcal{S}^{c} \mid Z_n}
    =
    q_n^{Z_n} \quad \text{hence by Levy's zero-one law} \quad \mathbf{1}_{\mathcal{S}^{c}} = \lim_{n \to \infty} q_{n}^{Z_{n}}.
\]
Hence, 
\begin{equation} \label{eq:last}
    \lim_{n \to \infty} q_{n}^{Z_n} \quad \text{almost surely, on survival}.
\end{equation} Now let $I_n$ be the number of particles alive at generation $n$ whose
descendants survive forever. Conditionally on $Z_n$,
\[
    I_n \sim \operatorname{Bin}(Z_n,1-q_n).
\]
Moreover, $(I_n)_{n\geq 0}$ is nondecreasing, since every particle with an
infinite line of descent must have at least one child with an infinite line of
descent.

We show that, on the event $\{\limsup_{n\to \infty} Z_n=\infty\}$, one has
$\lim_{n \to \infty} I_n = \infty$. Set $\lambda_{n} := Z_{n}(1-q_n)$. First, note that, on this event, along any subsequence $(n_{j})$ for which $\lim_{j \to \infty} Z_{n_{j}} = \infty$, we have \[\liminf_{j \to \infty} \lambda_{n_{j}} = \liminf_{j \to \infty} Z_{n_{j}} (1- q_{n_{j}}) = \infty. \] 
Indeed, if this were not the case, if $C$ denotes an eventual upper bound on a subsequence, we would have 
\begin{equation} \label{eq:upper-bound-rate}
\limsup_{n \to \infty} q_{n}^{Z_{n}} = \limsup_{n \to \infty} (1- (1-q_{n}))^{Z_{n}} \geq \limsup_{n \to \infty} \left( 1 - \frac{C}{Z_{n}}\right)^{Z_{n}} = e^{-C},  
\end{equation}
contradicting~\eqref{eq:last}. Note also that, with $(\mathcal{F}_{n})_{n \in \mathbb{N}_0}$ denoting the filtration generated by the process $\E{I_{n}\mid \mathcal{F}_{n}} = \lambda_{n}$ and $\Var{I_{n}\mid \mathcal{F}_{n}} \leq \lambda_{n}$. By Cantelli's inequality, for any $K \in \mathbb{N}$
\[
\Prob{I_{n} \leq K \mid \mathcal{F}_{n}} \leq \frac{\lambda_{n}}{\lambda_{n} + (\lambda_{n} - K)^2}.
\]
Since the right-side tends to $0$ along a subsequence where $\lambda_{n} \rightarrow \infty$, and $I_{n}$ is non-decreasing, by Levy's zero-one law we must have $\lim_{n\to\infty} I_{n} = \infty$. Since $Z_{n} \geq I_{n}$, this implies $\lim_{n \to \infty} Z_{n} = \infty$. 
\end{proof}

\section*{Acknowledgements}
TI is funded by Deutsche Forschungsgemeinschaft (DFG) through DFG Project no. $443759178$.

\bibliographystyle{abbrv}
\bibliography{refs}

\end{document}